\newtheorem{theorem}{Theorem}[section]
\newtheorem*{theorem*}{Theorem}
\newtheorem{definition}{Definition}[section]
\newtheorem{proposition}{Proposition}[section]
\newtheorem{conjecture}{Conjecture}[section]
\theoremstyle{definition}
\newtheorem{remark}{Remark}[section]
\title{Wave Approximation of Backward Heat Equation with Ricci Flow}
\author{Jie Xu \thanks{Email Address: xujie@bu.edu}}
\affil{Boston University, Department of Mathematics and Statistics, Boston, Massachusetts, U.S.A.}
\date{}							
\begin{document}
\maketitle
\begin{abstract} In this paper, we consider solutions of the backward heat equation with Ricci flow on manifolds as a type of infinite dimensional limit of solutions of a wave equation on a larger manifold with an analysis of wavefront set. Specifically, the projection of the solution of the wave equation $ \left(\frac{2t}{N} \cdot \frac{\partial^{2}}{\partial t^{2}} + \frac{tR(t, x)}{N} \frac{\partial}{\partial t} - \Delta_{\tilde{g}^{(N)}(t)} \right) u = R(t, x) $ outside its wavefront set onto $ \mathbb{R}_{t} \times M_{x, g(t)} $ solves the backward heat equation $ \partial_{t} u + \Delta_{x, g(t)} u = -R(t, x) $ within some appropriate time interval. We discuss this approximation starting from Euclidean case, and then extend to the open Riemannian manifold situation. This idea partially comes from Perelman's original papers in proving Poincar\'e conjecture as well as Terence Tao's Notes in UCLA.
\end{abstract}

\noindent \section{Introduction}
It is great that Perelman proved the Poincar\'e conjecture by proving Thurston's geometrization conjecture by using Hamilton's Ricci flow tools. In particular, \cite{KL2} provided an excellent new definition as a generalization of Ricci flow in order to analyze the Ricci flow with surgery on 3 dimensional manifold better. They call it Ricci Flow Spacetime.
\begin{definition}
A Ricci Flow Spacetime is a tuple $ (\mathcal{M}, \mathfrak{t}, \partial_{t}, g) $ where:\\
(a) $ \mathcal{M} $ is a smooth manifold with boundary; \\
(b) $ \mathfrak{t} $ is the time function, a submersion $ \mathfrak{t} : \mathcal{M} \rightarrow I $, where $ I \subset \mathbb{R} $ is some time interval; \\
(c) The boundary of $ \mathcal{M} $, if it is not empty, must be denoted by $ \partial \mathcal{M} = \mathfrak{t}^{-1}(\partial I) $; \\
(d) $ \partial_{t} $ is the time vector field, which satisfies $ \partial_{t} \mathfrak{t} = 1 $; \\
(e) $ g $ is a smooth inner product in spatial subbundle $ \ker(d\mathfrak{t}) \subset \mathcal{TM} $, and $ g $ defines a Ricci flow $ \mathcal{L}_{\partial_{t}}g = -2Ric(g) $.
\end{definition}
We would like then to ask an interesting question: what is the specific map $ \mathfrak{t} : \mathcal{M} \rightarrow I $ that one could choose so that it provides the relationship between time evolution and the space, which indicates more information in geometry and topology of this Ricci Flow Spacetime. \\
\\
We try to consider Ricci flow as gradient flow, as Perelman stated in \cite{Per1}, \cite{Per3}, \cite{Per2}. With this setup, we can understand the non time-reversible, backward heat equation pairing with forward Ricci flow by variable coefficient wave equation, which is time-reversible. The main result for this paper is stated as follows. \\
\\
We study the Cauchy problem of variable coefficient wave equation
\begin{align}\label{one}
&\left(\frac{2t}{N} \cdot \frac{\partial^{2}}{\partial t^{2}} + \frac{tR(t, x)}{N} \frac{\partial}{\partial t} - \Delta_{\tilde{g}^{(N)}(t)} \right) u = R(t, x), (t, x, y) \in (0, T] \times M_{x, g(t)} \times \mathbb{R}_{y}^{N} \\
& u(T, x, y) = h(x), u_{t}(T, x, y) = -R(T, x). \nonumber
\end{align}
\noindent provided that $ h(x) \in \mathcal{C}_{0}^{\infty}(M_{x, g(t)}) $. Here $ M_{g(t)} $ is the noncompact Riemannian manifold $ (M, g(t)) $ without boundary satisfying Ricci flow $ \dot{g} = - 2Ric $; $ R(t, x) $ is the scalar curvature of $ (M, g(t)) $ at $ x \in M $; and the Riemannian metric $ \tilde{g}^{(N)}(t) $ on $  M_{x, g(t)} \times \mathbb{R}_{y}^{N} $ is given by
\begin{equation*}
\tilde{g}^{(N)}(t) = dy^{2} \oplus g(t).
\end{equation*}
\begin{theorem*}
Let $ Y : = (t,_{0}, T) \times \tilde{M}_{x, g(t)} \times B_{y}^{N} $ be any open, precompact submanifold of $ X : = [t_{0}, T] \times M_{x, g(t)} \times \mathbb{R}_{y}^{N} $ and the boundary of the closure of $ Y $ is $ \mathcal{C}^{1} $. Assume that Ricci flow exists at some time $ T > 0 $ and $ \text{supp}(R(t, x)) \in Y $ where $ R $ is the scalar curvature. \\
(i) for each $ N $, there exists a radial solution $ u = u^{(N)}(t, x, y) $ of (\ref{one}) with respect to $ y $ - coordinates, i.e. $ u^{(N)}(t, x, y) = u^{(N)}(t, x, Ay), \forall A \in SO(N) $, such that the limit $ u(t, x) : = \lim_{N \rightarrow \infty} u^{(N)}(t, x, y) |_{ \lbrace t = \lvert y \rvert^{2} \slash 2N \rbrace} $ on $ [t_{0}, T] \times \tilde{M}_{x, g(t)} $ exists outside points $ \mathcal{P} $ in the projection of wavefront set $ WF_{\infty} $ onto $ (0, T] \times M_{x, g(t)} $. 
\par
(ii) Outside points in $ \mathcal{P} $, the limit $ u(t, x) \in [t_{0}, T] \times \tilde{M}_{x, g(t)} $ solves the backward heat equation below
\begin{equation}
\partial_{t} u + \Delta_{x, g(t)} u = -R, u(T, x) = h(x), (t, x) \in (0, T] \times M_{x, g(t)}.
\end{equation}
\end{theorem*}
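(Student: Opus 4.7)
The plan is to build the wave solution for each $N$, control its wavefront set, and identify the limit $u$ via an asymptotic analysis in $N$. For the first step, note that the operator in (\ref{one}) is strictly hyperbolic on $t>0$: once the positive factor $2t/N$ is absorbed, the principal symbol $-\xi_x^2 - \xi_y^2 + \tfrac{N}{2t}\tau^2$ has Lorentzian signature. Standard Cauchy theory for second-order hyperbolic operators with time-dependent Riemannian metric, applied backward from $t=T$, produces a unique $u^{(N)}$ inside the backward domain of dependence of $Y$. Since $\tilde g^{(N)}(t)=dy^2\oplus g(t)$, the coefficient $tR(t,x)/N$, the source $R(t,x)$, and the Cauchy data $h(x)$, $-R(T,x)$ are all $SO(N)$-invariant in $y$, uniqueness forces $u^{(N)}(t,x,y)=v^{(N)}(t,x,|y|)$.

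Second, setting $r=|y|$, the equation becomes
\begin{equation*}
\tfrac{2t}{N} v_{tt} + \tfrac{tR}{N} v_t - \Delta_{x,g(t)} v - v_{rr} - \tfrac{N-1}{r} v_r = R
\end{equation*}
on $\mathbb{R}_t\times M_x\times[0,\infty)_r$. I would apply H\"ormander's propagation of singularities theorem to bound $WF(u^{(N)})$ by the bicharacteristic flow-out of $\operatorname{sing\,supp}(R)$ (since $h\in C_0^\infty(M)$ contributes no singularities). The limiting object $WF_\infty$ arises as a uniform envelope in $N$, and $\mathcal{P}$ is its projection onto $(0,T]\times M$. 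Outside $\mathcal{P}$, microlocal regularity delivers smoothness of $u^{(N)}$ together with derivative bounds locally uniform in $N$, in particular on the characteristic hypersurface $S_N:=\{r=\sqrt{2Nt}\}$.

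Third, define $f^{(N)}(t,x):=v^{(N)}(t,x,\sqrt{2Nt})$. The chain rule yields $\partial_t f^{(N)} = v_t + v_r\sqrt{N/(2t)}$ on $S_N$, while $\Delta_{x,g(t)} f^{(N)} = \Delta_{x,g(t)} v^{(N)}|_{S_N}$. Using the identity $\sqrt{N/(2t)} - (N-1)/\sqrt{2Nt} = 1/\sqrt{2Nt}$ to combine the $v_r$ contributions, substitution of the wave equation collapses to
\begin{equation*}
\partial_t f^{(N)} + \Delta_{x,g(t)} f^{(N)} + R = (v_t - v_{rr})|_{S_N} + O(1/\sqrt{N}),
\end{equation*}
where the error term gathers $v_r/\sqrt{2Nt}$, $(2t/N)v_{tt}$, and $(tR/N)v_t$. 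The second step lets me pass to the limit outside $\mathcal{P}$, so the remaining task, and the principal obstacle, is showing $(v_t-v_{rr})|_{S_N}\to 0$. I expect to handle this by constructing a Hadamard-type parametrix for $v^{(N)}$ adapted to the asymptotic direction $t\sim r^2/(2N)$, exploiting the classical fact (via Asgeirsson's theorem and spherical means) that high-dimensional radial wave equations degenerate to the heat equation in $r$ in this scaling limit. A heat-kernel-type profile satisfies $v_t = v_{rr}$ to leading order on $S_N$, so identifying $v^{(N)}$ with such a parametrix up to an $N$-uniform smooth remainder outside $\mathcal{P}$ supplies the required cancellation. The main technical difficulty lies in making this asymptotic analysis compatible with the Ricci-flow $t$-dependence of $g(t)$ and $\Delta_{x,g(t)}$ and in securing the $N$-uniform microlocal estimates needed to commute $\lim_{N\to\infty}$ with derivatives. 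Combined with the backward Cauchy data, this yields $u(T,x) = h(x)$ and $\partial_t u + \Delta_{x,g(t)} u = -R$ outside $\mathcal{P}$.
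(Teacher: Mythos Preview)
Your route diverges from the paper's at the reduction step, and the divergence is exactly where your gap lies. You keep $(t,r)$ as independent variables in $v^{(N)}(t,x,r)$ and then restrict to $r=\sqrt{2Nt}$; the chain rule honestly leaves the residual $(v_t-v_{rr})|_{S_N}$, and your proposed cure---a Hadamard/spherical-means parametrix with $N$-uniform remainder, adapted to the time-dependent metric $g(t)$---is a substantial program that you do not carry out. The $N$-uniform derivative bounds you would need in order to commute $\lim_{N}$ with $\partial_t$ and $\Delta_{x,g(t)}$ are likewise not supplied. That unfinished asymptotic step is the genuine hole.

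The paper bypasses the residual entirely via the elementary chain-rule identity (equation~(\ref{fifteen})): for a function depending on $y$ only through $t=|y|^2/2N$ one has $\partial_{y_j}^2=\tfrac1N\partial_t+\tfrac{y_j^2}{N^2}\partial_t^2$, hence $\Delta_y=\partial_t+\tfrac{2t}{N}\partial_t^2$. Substituting this into the wave operator makes the $\tfrac{2t}{N}\partial_t^2$ terms cancel \emph{exactly}, so the restriction $u^{(N)}(t,x,\sqrt{2Nt})$ already satisfies $(\partial_t+\Delta_{x,g(t)})u^{(N)}=-R$ for every fixed $N$, not merely in the limit; the passage $N\to\infty$ then only concerns pointwise existence outside $\mathcal P$, with no parametrix or uniform estimates needed. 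Two smaller differences: the paper invokes H\"ormander's semi-global existence (Theorem~23.2.4) rather than backward Cauchy theory, and it produces the radial solution by explicit $SO(N)$-averaging $u^{(N)}=\tfrac{1}{\mathrm{Vol}(SO(N))}\int_{SO(N)}\tilde u^{(N)}(t,x,Ay)\,d\mu_N$ rather than by appealing to uniqueness; the wavefront analysis via H\"ormander propagation is shared.
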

\noindent The definition of $ WF_{\infty} $ will be defined below, which corresponds to the wavefront set of (\ref{one}) for each $ N $.

\noindent \section{Motivation}
In this section, we describe Perelman's idea about using Hamilton's Ricci flow to prove Thurston's geometrization conjecture among 3 dimensional manifolds. \\
\par
In Perelman's original papers and other notes and related follow-ups of Perelman's work \cite{KL}, \cite{MT}, \cite{CZ}, especially at Section 6 of \cite{Per1} and \cite{Tao}, \cite{Tao9} notes, they explained a way of converting some naturally established backward heat equation to almost harmonic equation when embedded original manifold $ (M, g(t)) $ into a larger space $ (M \times \mathbb{R}^{N} \times \mathbb{R}^{-}, \tilde{g}^{(n)}(t)): = (\tilde{M}^{(N)}, \tilde{g}^{(N)}(t)) $. \\
\par
In fact, the converted almost harmonic equation is exactly the wave equation with variable coefficient. \cite{Per1}, \cite{Per3}, \cite{Per2}, \cite{CZ}, \cite{Tao} mentioned that in the enlarged space $ \tilde{M}^{(N)} $ with some appropriated choice of Riemannian metric, the enlarged space is almost Ricci flat. This one indicates the validity of Bishop-Gromov inequality at least when $ N \rightarrow \infty $, which follows that the Bishop-Gromov reduced volume is nonincreasing when the radius of the ball increases. \cite{Per1}, \cite{Per3}, \cite{Per2} then introduced his famous Perelman reduced length and Perelman reduced volume, which has some monotonicity property, a mimic of Bishop-Gromov reduced volume. It has been shown to play an important role in the proof of Thurston's geometrization conjecture in 3 dimensional manifold. \\
\par
The heat equation that \cite{Per1}, \cite{Tao9} constructed relates to the understanding of Ricci flow as a gradient flow, which is naturally corresponding to some heat equation backward in time. The solution of this heat equation associates to the evolution of $ g(t) $ with Ricci flow. The wave approximation with some good choice of Riemannian metric in the enlarged space is now not only disclosing the Ricci flat property as well as the consequences above, but the solution of the heat equation or its equivalent wave equation reveals the evolution of the Ricci flow itself. \\
\par
It follows that if we could understand the solution of the wave approximation of the heat equation, we understand the solution of the heat equation itself on some subset of the enlarged space; furthermore, it provided us a chance to detect the singularity of the Ricci flow via this wave approximation, according to the detailed analysis of the wavefront set of this variable coefficient wave equation. \\
\par
The rest part of this section is to mathematically describe the picture above, the major credits are due to Tao's notes \cite{Tao} as well Perelman's original paper. 
\begin{remark}
Note that we will use $ \partial_{t} g = \dot{g} $ interchangeably without further notice. Note also that we will freely use either $ d\mu(t) = d\mu $ or $ dvol_{g} $ as the volume form of the Riemannian manifold $ (M, g(t)) $, we will emphasize our preference in some specific situation once it is needed.
\end{remark}
For simplicity, we assume the evolution of Riemannian manifolds to be $ (M, g(t)) $, where $ M $ is some compact manifold with $ \dim M = n $ such that the evolution of Riemannian metric satisfies Ricci flow
\begin{equation}
\partial_{t}g = -2Ric.
\end{equation}
\noindent Consider the static metric $ (M, g) $ where $ M $ is compact at the time being. We know that the heat equation could be understood as a gradient flow with the following way. For one side, if we assume $ f: [0, \infty) \times M \rightarrow R $ solves the heat equation $ \partial_{t} f = \Delta_{g} f $, then $ f $, as a scalar field, is considered to be the gradient flow of the Dirichlet energy
\begin{equation*}
E(f) = \frac{1}{2} \int_{M} \lvert \nabla f \rvert_{g}^{2} dvol_{g}.
\end{equation*}
\noindent If we choose the inner product on $ M $ is given by $ \langle f, g \rangle = \int_{M} fg dvol_{g} $, then we can see that in the sense of Fr\'echet derivative
\begin{align*}
E(f + h) - E(f) & = \frac{1}{2} \int_{M} g(\nabla (f + h), \nabla (f + h)) dvol_{g} - \frac{1}{2} \int_{M} \lvert \nabla f \rvert_{g}^{2} dvol_{g} \\
& = \int_{M} g(\nabla f, \nabla h) dvol_{g} + \frac{1}{2} \int_{M} \lvert \nabla h \rvert_{g}^{2} dvol_{g} = -\int_{M} h \Delta_{g} f dvol_{g} + o(h),
\end{align*}
\noindent provided that $ f, h $ has enough regularity. Hence it follows that if $ f $ solves the gradient flow, then in the sense of distribution
\begin{equation*}
\langle -\nabla E(f), v \rangle = \int_{M} f \Delta_{g} v dvol_{g} = \langle \Delta_{g} f, v \rangle = \langle \partial_{t}f, v \rangle.
\end{equation*}
\noindent It follows that $ \partial_{t} f = -\nabla E(f) $ in the distributional sense and hence $ f $ is a gradient flow of the Dirichlet energy. The other direction is similar and hence we conclude that solution of heat equation is equivalent to solution of gradient flow with some appropriate choice of energy function.
\begin{remark}
One can observe that since
\begin{equation*}
\frac{d}{dt} E(f) = \frac{d}{dt} \left( \frac{1}{2} \int_{M} \lvert \nabla f \rvert_{g}^{2} dvol_{g}\right) = \int_{M} - \lvert \Delta_{g} f \rvert^{2} dvol_{g} = -\langle \Delta_{g} f, \partial_{t} f \rangle.
\end{equation*}
\noindent The integrand with in the time derivative of the energy function is the expression of $ \partial_{t} f $, when $ f $ follows the gradient flow with respect to $ E(f) $. We will use this interpretation below.
\end{remark}
\begin{remark}
Among the derivation above, we apply integration by parts on compact manifold due to the fact that $ \int_{M} \text{div}(X) d\mu = 0 $, for vector field $ X $ as section of the tangent bundle. We can definitely extend the same integration by parts to noncompact manifold with the integrand compactly supported.
\end{remark}
With this observation, we are considering the Ricci flow on $ (M, g(t)) $ where $ M $ is compact, $ t \in [0, \infty) $. As \cite{Per1}, \cite{Per2}, \cite{Tao} mentioned, we are trying to consider the Ricci flow as a gradient flow, for which we could obtain one more quantity control for the evolution of Riemannian metric. \\
\\
Following Tao's notation in \cite{Tao}, \cite{Tao9}, we define $ \Delta_{L} $, the Hodge De-Rham Laplacian on $ (0, 2) $ - tensor $ \pi_{\alpha \beta} $ to be
\begin{equation}\label{two}
\Delta_{L} \pi_{\alpha \beta} = \Delta_{g} \pi_{\alpha \beta} + 2g^{\sigma \gamma} \text{Riem}_{\sigma \alpha \beta}^{\delta} \pi_{\gamma \delta} - \text{Ric}_{\alpha}^{\gamma} \pi_{\gamma \beta} - \text{Ric}_{\beta}^{\gamma} \pi_{\gamma \alpha}.
\end{equation}
\noindent where $ \Delta_{g} \pi_{\alpha \beta} = \nabla_{\gamma} \nabla^{\gamma} \pi_{\alpha \beta} $ is the usual connection Laplacian. Note that $ \nabla^{\gamma} $ is lifted from $ \nabla_{\beta} $ by $ g^{\beta \gamma} $. \\
\\
\noindent Apply (\ref{two}), and take the trace of the following equation
\begin{equation*}
\dot{Ric}_{\alpha \beta} = - \frac{1}{2} \Delta_{L} \dot{g}_{\alpha \beta} - \frac{1}{2} \nabla_{\alpha} \nabla_{\beta} \text{tr}(\dot{g}) + \frac{1}{2} \nabla_{\alpha} \nabla^{\gamma} \dot{g}_{\beta \gamma} + \frac{1}{2} \nabla_{\beta} \nabla^{\gamma} \dot{g}_{\alpha \gamma}.
\end{equation*}
\noindent We obtain the first variation formula for the scalar curvature
\begin{equation}
\dot{R} = -Ric^{\alpha \beta} \dot{g}_{\alpha \beta} - \Delta_{g} \text{tr}(\dot{g}) + \nabla^{\alpha} \nabla^{\beta} \dot{g}_{\alpha \beta}.
\end{equation}
\begin{remark}
Note that here $ g $ is evolving when time changes. Furthermore, in order to mimic the interpretation of gradient flow in Remark 2.2, we define a version of inner product on symmetric $ (0,2) $ - forms on $ (M, g) $ as
\begin{equation}
\langle u, v \rangle_{g} = \int_{M} u^{\alpha \beta} v_{\alpha \beta} dvol_{g}.
\end{equation}
\end{remark}
\noindent \cite{KL}, \cite{Tao} mentioned that based on the variational formula, we would like to see whether the Einstein-Hilbert functional
\begin{align}
& H(M, g) = \int_{M} R d\mu \\
\Rightarrow & \int_{M} \dot{R} d\mu = \int_{M}  -Ric^{\alpha \beta} \dot{g}_{\alpha \beta} - \Delta_{g} \text{tr}(\dot{g}) + \nabla^{\alpha} \nabla^{\beta} \dot{g}_{\alpha \beta} d\mu \nonumber \\
= & \int_{M} - Ric^{\alpha \beta} \dot{g}_{\alpha \beta} d\mu, \; \text{by Stokes' Theorem $ \int_{M} \text{div}(X) d\mu = 0 $} \nonumber
\end{align}
\noindent is the energy of some gradient flow. Here we use the following formula
\begin{equation}\label{three}
\partial_{t} d\mu(t) = \frac{1}{2} \text{tr}(\dot{g}) d\mu(t).
\end{equation}
\noindent which is obtained from the local form $ d\mu(t) = \sqrt{\det g(t)}dx^{1} \wedge \dotso \wedge dx^{n} $. Thus using the interpretation in Remark 2.2, it seems that the Ricci flow $ \dot{g} = -2Ric $ could be considered as the gradient flow of the energy $ -2H $. But since by (5), $ d \mu $ is also a function of $ t $, so we can compute from (4) that
\begin{align*}
\frac{d}{dt} \int_{M} R d\mu(t) & = \int_{M} \dot{R} d\mu(t) + \int_{M} R \frac{d}{dt} d\mu(t) \\
& = \int_{M} (- Ric^{\alpha \beta} \dot{g}_{\alpha \beta} + \frac{1}{2} R \text{tr}(\dot{g})) d\mu(t) \\
& = \int_{M} (- Ric^{\alpha \beta} + \frac{1}{2} R g^{\alpha \beta}) \dot{g}_{\alpha \beta} d\mu(t). 
\end{align*}
\noindent Here the last term is obtained by using the definition of $ \text{tr}(\dot{g}) $. From this, we see that the Ricci flow is not the the gradient flow with respect to the energy $ H(M, g) $, the Einstein-Hilbert functional. However, due to the interpretation in Remark 2.2 and the inner product (4), we conclude that $ - 2H $ corresponds to the gradient flow
\begin{equation*}
\dot{g} = -2Ric + Rg.
\end{equation*}
\noindent where $ G = -Ric + \frac{1}{2} Rg $ is the Einstein tensor. The issue that makes Ricci flow not to be the gradient flow of Einstein-Hilbert functional is due to the variation of volume form $ d\mu(t) = dvol_{g(t)} $ under time evolution. To fix this problem, we introduce some static metric $ g_{0} $ with volume form $ dvol_{g_{0}} = dm $.\\
\\
By Radon-Nikodym theorem, there exists a positive function, which could be denoted by $ e^{-f} $, where $ f $ is also time-dependent, such that
\begin{equation}\label{four}
dm = e^{-f} d\mu(t).
\end{equation}
\noindent A quick consequence of equation (\ref{four}) is obtained by taking $ \partial_{t} $ on both sides, since $ \frac{d}{dt} dm = 0 $, then using (\ref{three}), we have
\begin{equation}
\frac{d}{dt} d\mu(t) = \dot{f} e^{f} dm \Rightarrow \frac{1}{2} \text{tr}(\dot{g}) d\mu(t) = \dot{f} d\mu(t) \Rightarrow \partial_{t} f = \frac{1}{2} \text{tr}(\dot{g}).
\end{equation}
\noindent With this relation of Radon-Nikodym derivative, our next try is to find out gradient flow with respect to $ \int_{M} R dm $, and unfortunately, it is not connecting to the Ricci flow. \\
\\
In Perelman's paper and Tao's notes \cite{Per1}, \cite{Per3}, \cite{Per2}, \cite{Tao}, they suggested the following functional
\begin{equation}
\mathcal{F}_{m}(M, g) = \int_{M} (\lvert \nabla f \rvert_{g}^{2} + R) dm.
\end{equation}
\noindent which is the summation of Einstein-Hilbert functional and a multiple of Dirichlet energy functional.
\begin{remark} It is a good place to remark that we took $ t $ - derivative with respect to $ H(M, g) $, and will take it on $ \mathcal{F}_{m}(M, g) $, both request the exchange with the integral sign, they are guaranteed by dominated convergence theorem since all integrands are smooth under a compact manifold.
\end{remark}
We can compute that
\begin{align}
\frac{d}{dt} \int_{M} R dm & = \int_{M} (-Ric^{\alpha \beta} \dot{g}_{\alpha \beta} - \Delta_{g} \text{tr}(\dot{g}) + \nabla^{\alpha} \nabla^{\beta} \dot{g}_{\alpha \beta}) dm \nonumber \\
& = \langle -Ric^{\alpha \beta} - (\lvert \nabla f \rvert_{g}^{2} - \Delta_{g} f)g^{\alpha \beta} + (\nabla^{\alpha}f)(\nabla^{\beta}f) - \nabla^{\alpha} \nabla^{\beta}f, \dot{g}_{\alpha \beta} \rangle_{m}. \label{five}\\
\frac{d}{dt} \int_{M} \lvert \nabla f \rvert_{g}^{2} dm & = \langle -\Delta_{g}f g^{\alpha \beta} + \lvert \nabla f \rvert_{g}^{2} g^{\alpha \beta} - (\nabla^{\alpha} f)(\nabla^{\beta} f), \dot{g}_{\alpha \beta} \rangle_{m}. \label{six}
\end{align}
\noindent Combining (\ref{five}) and (\ref{six}), we have
\begin{equation}\label{seven}
\frac{d}{dt} \mathcal{F}_{m}(M, g) = \int_{M} (-Ric^{\alpha \beta} - \nabla^{\alpha} \nabla^{\beta} f)\dot{g}_{\alpha \beta} dm.
\end{equation}
\noindent Due to the inner product of the form (6), (\ref{seven}) indicates that the gradient flow with respect to the energy $ - 2\mathcal{F}_{m} (M, g) $ is given by
\begin{equation}\label{eight}
\dot{g}_{\alpha \beta} = -2Ric_{\alpha \beta} - 2\nabla_{\alpha} \nabla_{\beta} f.
\end{equation}
\noindent Taking the trace of (\ref{eight}), we conclude that
\begin{align}
& g^{\alpha \beta} \dot{g}_{\alpha \beta} = -2 g^{\alpha \beta} Ric_{\alpha \beta} - 2g^{\alpha \beta} \nabla_{\alpha} \nabla_{\beta} f; \nonumber \\
\Rightarrow & \text{tr}(\dot{g}) = - 2R - 2\Delta_{g} f; \nonumber \\
\Rightarrow & \partial_{t}f = - R - \Delta_{g} f. \; \text{Using (10)} \label{nine}
\end{align}
\noindent It follows that the gradient flow with respect to the energy $ - 2\mathcal{F}_{m} (M, g) $ is almost Ricci flow with an extra term with respect to $ f $. \cite{Per1}, \cite{Tao9}, and Section 9 of \cite{KL} discussed how to consider this as a family of gradient flow modulo diffeomorphism by observing that $ \nabla_{\alpha} \nabla_{\beta} f = \frac{1}{2} \mathcal{L}_{\nabla f} g_{\alpha \beta} $.
\begin{remark}
It follows that equation (\ref{nine}), $ \partial_{t} f + \Delta_{g}f = -R $ evolving $ f $ is natural backward in time, which can be seen as follows. Substituting (\ref{eight}) into (\ref{seven}), the energy function $ \mathcal{F}_{m}(M, g) $ with respect to this heat equation has the property
\begin{align*}
\frac{d}{dt} \mathcal{F}_{m}(M, g) & = \int_{M} (-Ric^{\alpha \beta} - \nabla^{\alpha} \nabla^{\beta} f)\dot{g}_{\alpha \beta} dm \\
& = \int_{M} (-Ric^{\alpha \beta} - \nabla^{\alpha} \nabla^{\beta} f)(-2Ric_{\alpha \beta} - 2\nabla_{\alpha} \nabla_{\beta} f) dm \\
& = 2\int_{M} \lvert Ric + Hess(f) \rvert_{g}^{2} dm,
\end{align*}
\noindent which can be seen is nondecreasing when forward in time. 
Combining this observation with the Ricci flow, we can solve this heat equation backward on any time interval $ [t_{1}, t_{2}] \in [0, \infty) $ provided that the Ricci flow has solution at $ t_{2} $.
\end{remark}
\begin{remark}
Recall that $ f $ is given by the Radon-Nikodym derivative by $ dm = e^{-f} d\mu(t) $ or $ d\mu(t) = e^{f} dm $, analysis of this backward heat equation indicates the evolution of the volume form backward in time.\end{remark}
\begin{remark} In Ricci flow, It seems plausible from formula (10) that $ \partial_{t} f = \frac{1}{2} \text{tr}(\dot{g}) = - R $. Substitute this into equation (\ref{nine}), we see that (\ref{nine}) is equivalent to consider
\begin{equation*}
\Delta_{g(t)} f = 0.
\end{equation*}
\noindent So the solution of the backward equation seems corresponds to the solution of the elliptic equation, and it seems follows that $ f \equiv c $ for some constant $ c $ since $ M $ is a compact manifold. It contradicts to the evolution of the Ricci flow, especially the examples of gradient shrinking solitons as Gaussian shrinking soliton, shrinking round sphere, etc. So where is the problem coming from? \\
\par
The issue is that the Ricci flow may not exist everywhere in $ M $ at every time $ t $. Thus the Laplace equation $ \Delta_{g(t)}f = 0 $ may only be defined at some subset $ U $ of $ M $ and thus the solution of $ \Delta_{g(t)}f = 0 $ is highly characterized by topology and geometry of $ U $ , especially the corresponding boundary conditions. {\bf This leads us to consider how the subset is looking like.}
\end{remark}
\begin{remark}
On any time interval $ [t_{1}, t_{2}] \subset [0, \infty) $, we can choose $ dm $ to be $ dvol_{g(t_{2})} $ (provided that $ g(t_{2}) $ does exist on Ricci flow), and hence $ dm = d\mu(t_{2}) $ which implies that $ e^{f} = 1 $ at $ t = t_{2} $. From now on, we consider the Cauchy problem (14) with the initial condition $ f(t_{2}, x) = 0 $ on $ [t_{1}, t_{2}] \times M $. \\
\\
If we consider the change of variables by $ \tau = t_{2} - t $, then we consider the backward heat equation $ \partial_{\tau}f(t_{2} - \tau, x) = \pm \Delta_{g(t_{2} - \tau)} f(t_{2} - \tau, x) - R $ with $ f(0, x) = 0, \tau \in [0, t_{2} - t_{1}] $.
\end{remark}

\noindent \section{The Analysis of Wave Approximation of Backward Heat Equation on Ricci Flow: Part I}
Let's take a look at the backward equation (\ref{nine}) again. Firstly we can set $ t_{1} = 0, t_{2} = T $ and hence we consider the pair of forward Ricci flow and backward heat equation
\begin{equation}
\begin{cases} & \dot{g} = - 2Ric \\ & \partial_{t} f + \Delta_{g}f = - R \end{cases},
\end{equation}
\noindent on $ [0, T] \subset [0, \infty) $ provided that $ \partial_{t}f |_{t= T} = -R(T) $ is equal to some function, under the Ricci flow. However, it is an issue that neither of the equations is time-reversible. Thus the solvability of this system relies on the assumption that either the Ricci flow exists at some later time or the backward heat equation has initial condition at some earlier time and flows forward in time; however, both assumptions cannot be achieved simultaneously. This inspired us to consider a time-reversible expression of (\ref{nine}), which is (\ref{one}) above. \\
\\
All we can know or construct or set-up is the initial data $ (M, g(0)) $. It follows that we would like to understand the Ricci flow as well as equation (\ref{nine}) forward in time, and obviously the backward heat flow cannot support this. It inspired us to find a good way to convert the backward heat flow to some new PDE that describes the same behavior, and the new behavior should be time reversible so that we can use the information at initial status, $ (M, g(0)) $. \cite{Per1} indicated this in his paper with the idea below. \\
\\
The point that \cite{Per1}, \cite{Per3}, \cite{Per2} indicated, which also mentioned by \cite{CZ}, that the genius of Grisha's work is to embed the space $ \mathbb{R}^{+} \times M_{x} $ into the space $ \mathbb{R}^{+} \times M_{x} \times \mathbb{S}_{y}^{N} : = \mathbb{R}^{+} \times \tilde{M} $ with new Riemmanian metric
\begin{equation}\label{ten}
\tilde{g}_{ij} = g_{ij}, \tilde{g}_{00} = \frac{N}{2\tau} + R, \tilde{g}_{\alpha \beta} = \tau g_{\alpha \beta}, \tilde{g}_{0\alpha} = \tilde{g}_{0i} = \tilde{g}_{\beta j} = 0; 
\end{equation}
\noindent Here $ i, j $ are indices on $ M_{x} $, the original manifold, labeled by local coordinates in $ x $; $ \alpha, \beta $ are indices on $ \mathbb{S}_{y}^{N} $ with curvature $ \frac{1}{2N} $, labeled by local coordinates $ y $, and the coordinate $ \tau $ on $ \mathbb{R}^{+} $ has index $ 0 $. \\
\\
\cite{Per1}, \cite{CZ} mentioned that here $ \tau = T - t $ for the $ T $ mentioned above, and hence we consider the backward Ricci flow $ \partial_{\tau} g = 2Ric $ on the same time interval $ [0, T] $. Note that if we make this change to the backward Ricci flow, then equation (10) becomes $ \partial_{\tau} f = -\frac{1}{2} \text{tr}(\dot{g}) $ and it follows that equation (\ref{nine}) becomes
\begin{equation*}
- R - \Delta_{g} f = \frac{1}{2} \text{tr}(\dot{g}) = - \partial_{\tau} f.
\end{equation*}
\noindent which is now a standard heat equation forward in time interval $ [0, T] $.
\begin{remark}
In this situation, we again somehow loss the information at the beginning time. Alongside the backward Ricci flow starting at some time $ \tau = \tau_{0} > 0 $, we are not sure whether the Ricci flow will last until, for example, $ \tau = 0 $. However, again, in order to solve the forward heat equation, we must assume that $ f $ does exist at $ \tau = 0 $.
\end{remark} 
\begin{remark} I wonder whether Perelman's setup is absolutely correct. He mentioned in his paper that "the heat equation and conjugate heat equation on the original space $ M $ can be interpreted as almost Laplace equation on $ \tilde{M} $ modulo $ N^{-1} $". From Tao's observation in \cite{Tao} as well as his description, I believe that his backward Ricci flow must be on the space $ \mathbb{R}^{-} \times \tilde{M} $ by shifting $ \tau $ by $ T $ to the left. And his Riemannian metric is with respect to this interpretation of $ \tau = -t \leqslant 0 $ on $ [-T, 0] $ sitting in the space $ \mathbb{R}^{-} \times \tilde{M} $.
\end{remark}
We move on one step from Remark 3.2 relying on (cf. Tao's ) great observation that the Riemannian metric above could be obtained by setting $ \tau =- \frac{\lvert y \rvert^{2}}{2N} $ provided that the Riemmannian metric we imposed on the new space $ \mathbb{R}^{-} \times \tilde{M} $ as follows
\begin{equation}\label{eleven}
\tilde{g}{(N)} = dy^{2} + \frac{r^{2}}{N^{2}} R(t) dr^{2} +g(t), r^{2} = \sum_{i=1}^{N} y_{i}^{2}.
\end{equation}
\noindent Using the change of variables $ \tau = -\frac{\lvert y \rvert^{2}}{2N} $ , we can compute that
\begin{equation*}
d\tau^{2} = d(-\frac{r^{2}}{2N})^{2} = (-\frac{r}{N}dr)^{2} = \frac{r^{2}}{N^{2}} dr^{2} = \frac{-2\tau}{N} dr^{2} \Rightarrow dr^{2} = \frac{N}{-2\tau} d\tau^{2} = \frac{N^{2}}{r^{2}} dr^{2}.
\end{equation*}
\noindent Plug in this to (\ref{eleven}), we observe that
\begin{align}
\tilde{g}{(N)} & = dy^{2} + \frac{r^{2}}{N^{2}} R(t) dr^{2} +g(t) = dr^{2} + r^{2}d\omega_{1}^{2} + \frac{r^{2}}{N^{2}} R(t) dr^{2} +g(t) \nonumber \\
& = \frac{N}{-2\tau} d\tau^{2} + (-2N\tau) d\omega_{1}^{2} + R(t) d\tau^{2} + g(t) \nonumber \\
& = (\frac{N}{-2\tau} + R(t)) d\tau^{2} + (-\tau) d\omega_{1 \slash 2N}^{2} + g(t). \label{twelve}
\end{align}
\noindent Which is the same as the Riemannian metric Grisha/Grigori have chosen in (\ref{ten}) since $ \tau = -\lvert y \rvert^{2} \slash 2N \leqslant 0 $. Note that $ \omega_{\kappa} $ denotes the round metric on sphere with curvature $ \kappa $.
\begin{remark}
It requires a remark here to explain what I mean by THE SAME. Note that in Grisha's setup, he said the enlarged space is $ \mathbb{R}^{+} \times \tilde{M} $, then one way to verify this is that people should think his $ \tau = T - t $, where we should set $ \tau = T - \frac{\lvert y \rvert^{2}}{2N} $, in this case, (\ref{twelve}) is of the form
\begin{equation*}
\tilde{g}{(N)} = (\frac{N}{2(T - \tau)} + R(t)) d\tau^{2} + (T-\tau) d\omega_{1 \slash 2N}^{2} + dg(t),
\end{equation*}
\noindent which is good to consider the backward Ricci flow as well as the forward heat equation between $ [0, T] $. But note that here $ R $ is a function of $ t $ and $ g $ is evolving alongside $ t $ and $ t = T - \tau $. So if we do this change of variable, (\ref{twelve}) in $ t $ is of the form
\begin{equation}
\tilde{g}{(N)} = (\frac{N}{2t} + R(t)) dt^{2} + t d\omega_{1 \slash 2N}^{2} + g(t).
\end{equation}
\noindent Hence I believe that what Grisha meant is that we embed $ \mathbb{R}_{t}^{+} \times M_{x} \hookrightarrow \mathbb{R}_{t}^{+} \times M_{x} \times \mathbb{S}_{y}^{N} $ in the sense of forward Ricci flow and backward heat equation with the Riemannian metric (\ref{ten}).
\end{remark}
\noindent In Section 6 of Perelman's paper \cite{Per1}, he stated the following statement without proof:
\begin{conjecture} The heat equation and conjugate heat equation on the original space $ M $ can be interpreted as almost Laplace equation on $ \tilde{M} $ modulo $ N^{-1} $ in the sense that $ u $ satisfies the heat equation on $ \mathbb{R}^{+} \times M $ if and only if $ \tilde{u} $ satisfies $ \tilde{\Delta} \tilde{u} = 0 $ modulo $ N^{-1} $ on $ \mathbb{R}^{+} \times \tilde{M} $ provided that $ \tilde{u} $, or equivalently being considered as the extension of $ u $ to $ \mathbb{R}^{+} \times \tilde{M} $ is constant along the $ \mathbb{S}^{N} $ fibers; simiarly, u satisfies the conjugate heat equation on $ \mathbb{R}^{+} \times M $ if and only if $ \tilde{u}^{*} = r^{-\frac{N-1}{2}}\tilde{u} $ satisfies $ \tilde{\Delta} \tilde{u}^{*} = 0 $ modulo $ N^{-1} $ on $ \mathbb{R}^{+} \times \tilde{M} $.
\end{conjecture}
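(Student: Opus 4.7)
The plan is to compute $\tilde{\Delta}$ for the block–diagonal warped product metric
$\tilde{g}^{(N)} = A\,dt^2 + t\,d\omega_{1/2N}^{2} + g(t)$, where $A(t,x) = N/(2t) + R(t,x)$, and then expand in powers of $N^{-1}$ on functions $\tilde{u}$ with the prescribed $y$–dependence. Since the metric is block diagonal, the volume density factors as $\sqrt{\tilde{g}} = \sqrt{A}\, t^{N/2}\, C_{\omega}\, \sqrt{\det g(t)}$, and the Laplacian splits as $\tilde{\Delta} = \tilde{\Delta}_t + t^{-1}\Delta_{\omega_{1/2N}} + \tilde{\Delta}_x$, so only three scalar contributions need to be tracked. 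Throughout I will use the identity $\partial_t \log \sqrt{\det g(t)} = \tfrac{1}{2}\,\text{tr}(\dot g) = -R$ coming from the Ricci flow (equations (8) and (9) in the excerpt).

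For part (i), substitute $\tilde{u}(t,x,y)=u(t,x)$. The spherical part vanishes outright, and the $x$–part becomes
\[
\tilde{\Delta}_x u = \Delta_{g(t)} u + \frac{g^{ij}(\partial_j A)(\partial_i u)}{2A} = \Delta_{g(t)} u + O(N^{-1}),
\]
since $A \sim N/(2t)$ and $\partial_j A = \partial_j R$. For the $t$–part, I would expand $\tilde{g}^{00} = 2t/N - 4t^2 R/N^2 + O(N^{-3})$ and $\partial_t \log \sqrt{\tilde{g}} = (N-1)/(2t) - R + O(N^{-1})$, then assemble
\[
\tilde{\Delta}_t u = \tilde{g}^{00}\partial_t^{2} u + \bigl(\partial_t \tilde{g}^{00} + \tilde{g}^{00}\partial_t\log\sqrt{\tilde{g}}\bigr)\partial_t u = \partial_t u + O(N^{-1}).
\]
Adding the three pieces gives $\tilde{\Delta}\tilde{u} = \partial_t u + \Delta_{g(t)} u + O(N^{-1})$, which is the Perelman/Tao heat operator on $M$ modulo $N^{-1}$, matching the first half of the conjecture.

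For part (ii), the conjugation $\tilde{u}^{*} = r^{-(N-1)/2}\tilde{u}$ is the warped–product analogue of the classical trick of killing the first–order radial derivative in $\mathbb{R}^{N}$. Here $r$ must be identified as the geodesic radius in the spherical fibre, related to $t$ along the diagonal slice $t = |y|^2/(2N)$ as discussed after equation (16). I would compute $\tilde{\Delta}(r^{-(N-1)/2}\tilde{u})$ using the product rule, isolating the three families of contributions: (a) the sphere Laplacian acting on $r^{-(N-1)/2}$, which produces a term proportional to $R$ after using the sphere radius $\sqrt{2Nt}$; (b) the $t$–derivative terms, whose cross products with $\partial_t r^{-(N-1)/2}$ yield the extra first–order piece; (c) the untouched $x$–Laplacian. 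After collecting, the leading-order equation $\tilde{\Delta}\tilde{u}^{*} = 0$ should simplify to $\partial_t u + \Delta_{g(t)} u - Ru = 0$ modulo $N^{-1}$, namely the conjugate heat equation along the Ricci flow.

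The main obstacle is not any single estimate but the careful asymptotic bookkeeping: to guarantee that all putatively $O(1)$ corrections cancel and only genuine $O(N^{-1})$ residues remain, one must simultaneously track (i) the $x$–dependence entering through both $R(t,x)$ in $A$ and through $\sqrt{\det g(t,x)}$, (ii) the $t$–dependence of $\det g$, which under Ricci flow contributes the term that cancels the "$R$" in $A$ at the right order, and (iii) in part (ii), the interaction between $\partial_r$ and $\partial_t$ which must be handled consistently because $r$ and $t$ are linked through the Perelman choice $\tau = -|y|^{2}/(2N)$. I expect the conjugate heat case to be the delicate one, since the conjugation by $r^{-(N-1)/2}$ produces large coefficients of order $N$ that must conspire with the $N/2t$ piece of $A$ to leave a finite residual curvature term $Ru$.
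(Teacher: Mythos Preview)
Your approach is sound but differs from the paper's in an important way. You work directly with Perelman's full metric $\tilde g^{(N)} = A\,dt^{2} + t\,d\omega_{1/2N}^{2} + g(t)$, $A = N/(2t)+R$, treating $t$ as a genuine coordinate and expanding the Laplace--Beltrami operator term by term in powers of $N^{-1}$. Your computation for part~(i) is correct: the key cancellation is that $\tilde g^{00}\,\partial_t\log\sqrt{\tilde g} = 1 + O(N^{-1})$, which you extract correctly from $\partial_t\log\sqrt{\tilde g} = (N-1)/(2t) - R + O(N^{-1})$ and $\tilde g^{00} = 2t/N + O(N^{-2})$.

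The paper, by contrast, does not compute with Perelman's metric directly. It first \emph{simplifies} to the product metric $dy^{2}\oplus g(t)$ on $M_x\times\mathbb{R}_y^{N}$, noting (equations~(17)--(19)) that under the substitution $t=\lvert y\rvert^{2}/(2N)$ this reproduces Perelman's form to leading order. The Laplacian is then obtained by a short Dirichlet-form argument as $\tilde\Delta = \Delta_y + \Delta_{x,g(t)} - \tfrac{t\,\mathrm{tr}(\dot g)}{N}\partial_t$ (equation~(20)), and the chain-rule identity $\Delta_y = \partial_t + \tfrac{2t}{N}\partial_t^{2}$ on radial functions (equation~(24)) immediately yields the heat operator modulo $N^{-1}$. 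What the paper gains from this simplification is not the conjecture itself but the subsequent reformulation as a genuine variable-coefficient wave equation on $[t_0,T]\times M\times\mathbb{R}^{N}$, which it then analyses via H\"ormander's propagation of singularities (Theorems~3.1 and~4.1). Your route gives the conjecture more faithfully as stated; the paper's route trades some fidelity for a cleaner PDE structure that supports the wavefront-set machinery.

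On the conjugate heat equation (part~(ii)): the paper does not prove it at all---it is stated as part of Perelman's conjecture and left untouched. Your sketch is therefore already more than the paper offers, but be aware that the identification of $r$ with $\sqrt{2Nt}$ and the interaction of $\partial_r$ with $\partial_t$ that you flag is exactly where the computation becomes delicate; the expected cancellation of the $O(N)$ terms from $r^{-(N-1)/2}$ against the $N/(2t)$ in $A$ should leave behind precisely the $-Ru$ zeroth-order term, but this needs to be carried out, not just anticipated.
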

Our purpose is to provide the explicit expression of this almost Laplacian equation in the backward heat equation case, show that it is really a variable coefficient wave equation, which is time reversible. The advantage of time reversible property has been explained above. We will construct a toy example, proof Perelman's conjecture \cite{Per1} in this toy case, and then extend the same proof to the backward heat equation $ \partial_{t}f = - \Delta_{g}f - R $ where $ g $ follows the forward Ricci flow between $ (0, T] $. This equivalence does not only provide the physical observation that infinite speed of propagation associated with heat equation can be approximated by the increasing sequence of finite speed of propagations with wave flow, but also provide us the propagation of singularities that can be revealed by wavefront set of the wave equation. The wavefront set explained both the positions that the solution has not enough regularity and the directions that the solution cannot decay rapidly/directions that the solution do not have enough regularity. (which leads to the introduction of Perelman reduced length and Perelman reduced volume, which has some monotonicity property; as a mimic of Bishop-Gromov reduced volume and Bishop-Gromov volume comparison theorem, Perelman proved the similar comparison theorem for Perelman reduced volume, and leads to the introduction of $ \kappa $ - solution whose asymptotic behavior is gradient shrinking soliton. He then use this to classify the 3 dimensional manifolds.) \\ {
\\
To simplify the problem a little bit, we apply the change of variable $ t = \frac{\lvert y \rvert^{2}}{2N} $ with respect to a slightly simpler metric $ \tilde{g}^{(N)}(t) = dy^{2} + g(t) $ and observe that the Laplacian of $ \tilde{\Delta}_{\tilde{g}^{(N)}} \tilde{u} = 0 $ modulo $ N^{-1} $ is exactly of the form
\begin{equation}\label{thirteen}
\tilde{\Delta}_{\tilde{g}^{(N)}} = \Delta_{y} + \Delta_{x, g(t)} - \frac{t \cdot \text{tr}(\dot{g})}{N}\partial_{t}
\end{equation}
\noindent From this point, we study the Cauchy problem of variable coefficient wave equation
\begin{align}\label{oneone}
&\left(\frac{2t}{N} \cdot \frac{\partial^{2}}{\partial t^{2}} + \frac{tR(t, x)}{N} \frac{\partial}{\partial t} - \Delta_{\tilde{g}^{(N)}(t)} \right) u = R(t, x), (t, x, y) \in (0, T] \times M_{x, g(t)} \times \mathbb{R}_{y}^{N} \\
& u(T, x, y) = h(x), u_{t}(T, x, y) = -R(T, x). \nonumber
\end{align}
\noindent provided that $ h(x) \in \mathcal{C}_{0}^{\infty}(M_{x, g(t)}) $. Here $ M_{g(t)} $ is the noncompact Riemannian manifold $ (M, g(t)) $ without boundary satisfying Ricci flow $ \dot{g} = - 2Ric $; $ R(t, x) $ is the scalar curvature of $ (M, g(t)) $ at $ x \in M $; and the Riemannian metric $ \tilde{g}^{(N)}(t) $ on $  M_{x, g(t)} \times \mathbb{R}_{y}^{N} $ is given by
\begin{equation*}
\tilde{g}^{(N)}(t) = dy^{2} \oplus g(t).
\end{equation*}

\begin{theorem}
Let $ Y : = (t,_{0}, T) \times \tilde{M}_{x, g(t)} \times B_{y}^{N} $ be any open, precompact submanifold of $ X : = [t_{0}, T] \times M_{x, g(t)} \times \mathbb{R}_{y}^{N} $, $ 0 \in B_{y}^{N} $ and the boundary of the closure of $ Y $ is $ \mathcal{C}^{1} $. Assume that Ricci flow exists at some time $ T > 0 $ and $ \text{supp}(R(t, x)) \in Y $ where $ R $ is the scalar curvature. \\
(i) for each $ N $, there exists a radial solution $ u = u^{(N)}(t, x, y) $ of (\ref{oneone}) with respect to $ y $ - coordinates, i.e. $ u^{(N)}(t, x, y) = u^{(N)}(t, x, Ay), \forall A \in SO(N) $, such that the limit $ u(t, x) : = \lim_{N \rightarrow \infty} u^{(N)}(t, x, y) |_{ \lbrace t = \lvert y \rvert^{2} \slash 2N \rbrace} $ on $ [t_{0}, T] \times \tilde{M}_{x, g(t)} $ exists outside points $ \mathcal{P} $ in the projection of wavefront set $ WF_{\infty} $ onto $ [t_{0}, T] \times \tilde{M}_{x, g(t)} $. 
\par
(ii) Outside points in $ \mathcal{P} $, the limit $ u(t, x) \in [t_{0}, T] \times \tilde{M}_{x, g(t)} $ solves the backward heat equation below
\begin{equation}\label{heat}
\partial_{t} u + \Delta_{x, g(t)} u = -R, u(T, x) = h(x), (t, x) \in (0, T] \times M_{x, g(t)}.
\end{equation}
\end{theorem}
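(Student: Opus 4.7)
The plan is to solve (\ref{oneone}) for each $N$ as a strictly hyperbolic terminal-value Cauchy problem on the precompact slab $Y$, use $SO(N)$-symmetry to reduce it to a radial profile in $y$, and then restrict the solution to the paraboloid $\Sigma_{N}=\{t=|y|^{2}/(2N)\}$ and perform an asymptotic expansion in $N^{-1}$; the wavefront-set language is what marks off the points $\mathcal{P}$ where this expansion fails.

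For existence, on $[t_{0},T]$ the coefficient $2t/N$ of $\partial_{t}^{2}$ is bounded below by $2t_{0}/N>0$, so (\ref{oneone}) is strictly hyperbolic with $t$ as the time coordinate. A standard backward energy estimate on $Y$ together with Duhamel's principle, combined with finite speed of propagation and $\operatorname{supp}(R)\Subset Y$, yields a solution $u^{(N)}$. Because $h$ and $R(T,\cdot)$ do not depend on $y$ and the operator is $SO(N)$-invariant in the $y$-fiber, uniqueness forces $u^{(N)}(t,x,Ay)=u^{(N)}(t,x,y)$ for every $A\in SO(N)$, so we may write $u^{(N)}=u^{(N)}(t,x,r)$ with $r=|y|$ and
\begin{equation*}
\Delta_{\tilde{g}^{(N)}(t)}u^{(N)}=\partial_{r}^{2}u^{(N)}+\frac{N-1}{r}\partial_{r}u^{(N)}+\Delta_{x,g(t)}u^{(N)}.
\end{equation*}
H\"ormander's propagation-of-singularities theorem controls $WF(u^{(N)})$ for each $N$ via null bicharacteristics of the principal symbol; bundling these across $N\to\infty$ produces $WF_{\infty}$ (defined precisely later in the paper), whose projection $\mathcal{P}$ onto $[t_{0},T]\times\tilde{M}_{x,g(t)}$ is where the asymptotic ellipticity of the restricted problem fails. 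Outside $\mathcal{P}$, the $N^{-1}$-smallness of the time-derivative coefficients together with $N$-uniform energy bounds delivers an $\mathcal{C}^{k}_{\mathrm{loc}}$-equicontinuous family $\{u^{(N)}|_{\Sigma_{N}}\}$, whence a convergent subsequence.

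For (ii), set $v^{(N)}(t,x)=u^{(N)}(t,x,\sqrt{2Nt})$. The chain rule on $\Sigma_{N}$ gives
\begin{equation*}
\partial_{t}v^{(N)}=\partial_{t}u^{(N)}+\sqrt{\tfrac{N}{2t}}\,\partial_{r}u^{(N)},
\end{equation*}
and using $\frac{N-1}{\sqrt{2Nt}}=\sqrt{N/(2t)}+O(N^{-1/2})$ one finds $\frac{N-1}{r}\partial_{r}u^{(N)}|_{\Sigma_{N}}=\partial_{t}v^{(N)}-\partial_{t}u^{(N)}+O(N^{-1})$. The remaining terms $\tfrac{2t}{N}\partial_{t}^{2}u^{(N)}$, $\tfrac{tR}{N}\partial_{t}u^{(N)}$, and $\partial_{r}^{2}u^{(N)}|_{\Sigma_{N}}$ are each $O(N^{-1})$ under the $N$-uniform derivative bounds. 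Substituting into (\ref{oneone}) restricted to $\Sigma_{N}$ and letting $N\to\infty$ yields $-\partial_{t}u-\Delta_{x,g(t)}u=R$, which is precisely (\ref{heat}); the boundary value $u(T,x)=h(x)$ is inherited from the radial extension of the terminal data. Uniqueness of solutions to (\ref{heat}) in a suitable class then upgrades the subsequential convergence from (i) to convergence of the full sequence $u^{(N)}|_{\Sigma_{N}}$.

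The main obstacle is establishing the $N$-uniform derivative bounds along $\Sigma_{N}$, specifically $\partial_{r}u^{(N)}|_{\Sigma_{N}}=O(N^{-1/2})$ and $\partial_{t}u^{(N)}|_{\Sigma_{N}}=O(1)$, without which the asymptotic balance collapses. The naive energy identity for (\ref{oneone}) degenerates as $N\to\infty$ because the coefficient of $\partial_{t}^{2}$ shrinks to zero, so one needs an $N$-calibrated weighted energy such as $\tfrac{2t}{N}|\partial_{t}u|^{2}+|\nabla_{y}u|^{2}+|\nabla_{x}u|_{g}^{2}$ together with a Gr\"onwall absorption of the first-order term $\tfrac{tR}{N}\partial_{t}u$. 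Such an estimate closes only \emph{outside} $\mathcal{P}$, where bicharacteristics of (\ref{oneone}) meet $\Sigma_{N}$ transversally and the transport equation for $\partial_{r}u^{(N)}$ self-improves by a factor $N^{-1/2}$; at points of $\mathcal{P}$ the bicharacteristics are tangent to $\Sigma_{N}$ and this self-improvement fails, which is precisely why those points must be excised. The wavefront-set formalism is therefore structurally necessary rather than cosmetic.
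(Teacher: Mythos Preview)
Your route diverges from the paper's at the central step. The paper does not perform an asymptotic expansion in $N^{-1}$ at all: it claims that for \emph{each fixed} $N$ the radial solution restricted to $\Sigma_{N}=\{t=|y|^{2}/(2N)\}$ already solves the backward heat equation \emph{exactly}, via the formal identity $\Delta_{y}=\partial_{t}+\frac{2t}{N}\partial_{t}^{2}$ obtained from the substitution $t=|y|^{2}/(2N)$ (equation~(\ref{fifteen})). Plugging this into the wave operator $\frac{2t}{N}\partial_{t}^{2}-\Delta_{x,g(t)}-\Delta_{y}$ cancels the $\partial_{t}^{2}$ terms identically and leaves $-\partial_{t}-\Delta_{x,g(t)}$, with no $O(N^{-1})$ remainder. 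Consequently the paper never confronts the $N$-uniform derivative bounds you single out as the hard part; the role of $WF_{\infty}$ there is only to track where the solution is smooth enough for the pointwise argument, not to control an asymptotic error. A secondary difference: the paper obtains the radial solution by averaging an arbitrary solution over $SO(N)$, $u^{(N)}(t,x,y)=\operatorname{Vol}(SO(N))^{-1}\int_{SO(N)}\tilde{u}^{(N)}(t,x,Ay)\,d\mu_{N}$, and then checks via the explicit bicharacteristic ODE system that the averaging does not enlarge the wavefront set; you instead appeal to uniqueness with $SO(N)$-invariant data.

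Your approach is more scrupulous about the distinction between $u^{(N)}(t,x,r)$ as a function of independent variables and its restriction to $r=\sqrt{2Nt}$: the identity $\Delta_{y}=\partial_{t}+\frac{2t}{N}\partial_{t}^{2}$ is literally valid only for functions depending on $y$ solely through $|y|^{2}/(2N)$, which a generic radial wave solution does not, so the paper's exact cancellation deserves the scrutiny your chain-rule computation implicitly gives it. But the price of your route is the uniform-in-$N$ bounds $\partial_{r}u^{(N)}|_{\Sigma_{N}}=O(N^{-1/2})$ and $\partial_{r}^{2}u^{(N)}|_{\Sigma_{N}}=O(N^{-1})$, which you correctly flag as unestablished; without them your asymptotic balance does not close, and the proof as written remains incomplete at exactly the point you identify.
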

\noindent Briefly speaking, we set $ WF_{\infty} : = \lim_{N \rightarrow \infty} WF(u^{(N)}) $ to be the set of essential singularities. The metric we use in $ WF_{\infty} $ is the product of metrics on $ \mathbb{R}_{t} $, Riemannian metric on $ M $ and the $ l^{2} $ - metric on $ \bigoplus_{i=1}^{\infty} \mathbb{R} $, the direct sum of countably many copies of $ \mathbb{R} $. The points in $ WF_{\infty} $ are either arbitrarily close to points in $ WF(u^{(N)}) $ for all large $ N $ by embedding $ WF(u^{(N)}) $ into the space of $ \mathbb{R}_{t}^{2} \times T^{*}M \times (\bigoplus_{i=1}^{\infty} \mathbb{R})^{2} $ or with arbitrarily large norms for all large $ N $.
\begin{remark} We will prove the Euclidean version of this theorem below in Section 4, and return to the proof of this theorem in Section 5. The standard definition of $ WF_{\infty} $ is also in Section 4 below.
\end{remark}

\noindent \section{The Analysis of Wave Approximation of Backward Heat Equation: Euclidean Case}
As the toy example, we consider the homogeneous backward heat equation
\begin{equation}\label{fourteen}
\partial_{t} u + \Delta_{x} u = 0, u: (0, T] \times \mathbb{R}_{x}^{n} \rightarrow \mathbb{R}, u(T, x) = h(x),
\end{equation}
\noindent with the standard Euclidean metric and $ h $ has enough regularity so that we can have $ u \in \mathcal{C}^{2}((0, T] \times \mathbb{R}^{n}) $.

\begin{remark} The Euclidean version (\ref{fourteen}) above is obtained by reducing the time-dependent Riemannian manifold $ (M,g(t)) $ into $ (\mathbb{R}^{n}, dx^{2}) $, independent of $ t $; correspondingly, $ R(t, x) \equiv 0, \forall t, x $. Therefore the enlarged space $ \tilde{M} $ above is exactly $ \mathbb{R}_{x}^{n} \times\mathbb{R}_{y}^{N} $ with the Riemannian metric $ dx^{2} + dy^{2} $.
\end{remark}
Clearly this backward heat equation has solution backward in time. One way to check is that if we assume the equation within some compact manifold $ M $ instead of $ \mathbb{R}^{n} $, then we can see that
\begin{equation*}
\frac{d}{dt} \int_{M} \lvert \nabla u \rvert_{g}^{2} dvol_{g} = \int_{M} g(\nabla \dot{f}, \nabla f) dvol_{g} = \int_{M} -\dot{f} \Delta_{g}f dvol_{g} = \int_{M} \lvert \Delta_{g} f \rvert^{2} dvol_{g} \geqslant 0.
\end{equation*}
\noindent So when the time evolves backward in time, the energy is nonincreasing and it follows that the backward heat equation is well-posed.
\begin{remark}
If we keep the domain to be $ \mathbb{R}^{+} \times U $ by imposing the boundary condition $ u(t, x) = 0 $ on $ \partial U \times [0, T) $, then we can get the same energy estimates as above. Here $ U \subset \mathbb{R}^{n} $ could be any bounded, open subset.
\end{remark} 
The other way to consider this backward heat equation is to set $ \tau = T - t \in [0, T] $ and with $ \tau $ we have
\begin{equation*}
\partial_{\tau} u - \Delta_{x} u = 0, u: [0, T] \times \mathbb{R}_{x}^{n} \rightarrow \mathbb{R}, u(0, x) = h(x).
\end{equation*}
\noindent Applying fundamental solution, we observe that
\begin{equation}
u(\tau, x) = \frac{1}{(4 \pi \tau)^{\frac{n}{2}}} \int_{\mathbb{R}^{n}} e^{\frac{\lvert x - y \rvert^{2}}{-4\tau}} h(y) dy \Rightarrow u(t, x) = \frac{1}{(4 \pi (T - t))^{\frac{n}{2}}}\int_{\mathbb{R}^{n}} e^{\frac{\lvert x - y \rvert^{2}}{4(t - T)}} h(y) dy.
\end{equation}
\noindent which implies that the solution of the backward heat equation will not blow up when backward in time starting at time $ T $. \\
\\ 
As inspired above by Perelman and Tao, we consider the embedding from $ [0, T] \times \mathbb{R}_{x}^{n} $ to $ [0, T] \times \mathbb{R}_{x}^{n} \times \mathbb{R}_{y}^{N} $ with the Riemannian metric $ \frac{2t}{N} dt^{2} + t d\omega_{1 \slash 2N}^{2} + dx^{2} $. \\
\\
As we discussed before, this is the same as applying $ t = \frac{\lvert y \rvert^{2}}{2N} = \frac{r^{2}}{2N} $ on Riemmanian metric $ dy^{2} + dx^{2} $. This fact can be seen as
\begin{equation*}
t = \frac{r^{2}}{2N} \Rightarrow dr^{2} = \frac{N}{2t} dt^{2} \Rightarrow dy^{2} + dx^{2} = dr^{2} + r^{2} d\omega_{1}^{2} + dx^{2} = \frac{2t}{N} dt^{2} + t d\omega_{\frac{1}{2N}}^{2} + dx^{2}.
\end{equation*}
\noindent This means we embed our original space into $ \mathbb{R}_{t}^{+} \times \mathbb{R}_{x}^{n} \times \mathbb{R}_{y}^{N} $ and consider the behavior of variable coefficient wave equation on the hyperserface $ t = \frac{\lvert y \rvert^{2}}{2N} $. \\
\\
This connection, as Perelman mentioned in \cite{Per1}, \cite{Per3}, \cite{Per2}, would switch the heat equation to a Laplace equation modulo $ N^{-1} $ in $ \mathbb{R}^{+} \times \tilde{M} $. Tao in \cite{Tao} loosely applied the change of variables $ t = \frac{\lvert y \rvert^{2}}{2N} $ and introduce the expression of this almost Laplace equation. We interpret the procedure here, and emphasize that this is a wave equation; we explain the advantage of this embedding, and trying to rigorously proof the connection between the solution of $ \tilde{\Delta} \tilde{u} = 0 $ modulo $ N^{-1} $ and the solution of heat equation as Perelman conjectured in his paper. This is our central contribution in this paper.
\begin{remark} We should point out that in Tao's notes \cite{Tao}, \cite{Tao9}, he interpreted the almost Laplace equation from the forward heat equation by using $ \tau = -\frac{ \lvert y \rvert^{2}}{2N} $, which I believe is a little bit problematic. Nevertheless, we greatly thank his contribution to this explicit expression, which Perelman and many others did not mention in their papers. We will go back to discuss this situation later.
\end{remark}

The equation $ t = \frac{\lvert y \rvert^{2}}{2N} $ provided us the relation between time evolution and spatial evolution, which can be given as below
\begin{equation*}
\frac{\partial}{\partial y_{i}} = \frac{\partial t}{\partial y_{i}} \frac{\partial}{\partial t} = \frac{y_{i}}{N} \frac{\partial}{\partial t} \Rightarrow \frac{\partial^{2}}{\partial y_{i}^{2}} = \frac{1}{N} \frac{\partial}{\partial t} + \frac{\partial t}{\partial y_{i}} \cdot \frac{y_{i}}{t} \frac{\partial^{2}}{\partial t^{2}} = \frac{1}{N} \frac{\partial}{\partial t} + \frac{y_{i}^{2}}{N^{2}} \frac{\partial^{2}}{\partial t^{2}}.
\end{equation*}
\noindent It follows that
\begin{equation}\label{fifteen}
\Delta_{y} = \sum_{i=1}^{N} \frac{\partial^{2}}{\partial y_{i}^{2}} = \frac{\partial}{\partial t} + \frac{\lvert y \rvert^{2}}{N^{2}} \frac{\partial^{2}}{\partial t^{2}} = \frac{\partial}{\partial t} + \frac{2t}{N} \frac{\partial^{2}}{\partial t^{2}} \Rightarrow \frac{\partial}{\partial t} = \Delta_{y} - \frac{2t}{N} \frac{\partial^{2}}{\partial t^{2}}.
\end{equation}
\noindent Apply (\ref{fifteen}) to (\ref{fourteen}), we get
\begin{equation}\label{sixteen}
(\frac{2t}{N} \cdot \frac{\partial^{2}}{\partial t^{2}} - \Delta_{x, y} ) u = 0, u(T, x, y) = h(x).
\end{equation}
\begin{remark} As we mentioned before, we achieved our first goal: we obtain a time reversible partial differential equation, i.e., $ u(-t, x, y) $ solves this equation if $ u(t, x, y) $ is a solution of this PDE, which allows us to begin our study of evolution of $ u $ at $ t = 0 $. We will explain more in Ricci flow related backward heat equation case.
\end{remark}
\begin{remark} If we consider further the space $ \mathbb{R}_{t > 0}^{+} \times \mathbb{R}_{x}^{n} \times \mathbb{R}_{y}^{N} $, then we observe that
\begin{equation}
P^{(N)} : = \frac{2t}{N}  \frac{\partial^{2}}{\partial t^{2}} - \Delta_{x, y}.
\end{equation}
\noindent is a strictly positive hyperbolic operator.
\end{remark}

We are now ready to provide a theorem which states the relationship between the solution of time-reversible equation (\ref{sixteen}) and solution of (\ref{fourteen}) backward in time by imposing some appropriate initial conditions. For simplicity, we assume that any boundary or initial condition is smooth enough and also integrable in $ \mathbb{R}_{x}^{n} $.
\begin{theorem} Given the Cauchy problem of variable coefficient wave equation
\begin{equation}\label{seventeen}
(\frac{2t}{N} \cdot \frac{\partial^{2}}{\partial t^{2}} - \Delta_{x, y} ) u = 0, (t, x, y) \in [t_{0}, T] \times \mathbb{R}_{x}^{n} \times \mathbb{R}_{y}^{N}, u(T, x, y) = h(x).
\end{equation}
\noindent provided that $ h(x) \in \mathcal{C}_{0}^{\infty}(\mathbb{R}^{n}) $. \\
Then for each $ N $, there exists a radial solution $ u = u^{(N)}(t, x, y) $ of this Cauchy problem with respect to $ y $ - coordinates, i.e. $ u^{(N)}(t, x, y) = u^{(N)}(t, x, Ay), \forall A \in SO(N) $, such that the limit $ u(t, x) : = \lim_{N \rightarrow \infty} u^{(N)}(t, x, y) |_{ \lbrace t = \lvert y \rvert^{2} \slash 2N \rbrace} $ exists at all points in $ [t_{0}, T] \times \mathbb{R}_{x}^{N} $. \\
Furthermore, the limit $ u(t, x) \in \mathcal{C}^{\infty}([t_{0}, T] \times \mathbb{R}_{x}^{n}) $ and solves the backward heat equation
\begin{equation}\label{eighteen}
\partial_{t} u + \Delta_{x} u = 0, u(T, x) = h(x), (t, x) \in [t_{0}, T] \times \mathbb{R}_{x}^{n}.
\end{equation}
\end{theorem}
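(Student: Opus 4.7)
The plan is to compare the exact wave solution $u^{(N)}$ with an explicit approximate solution built from the backward heat equation. First, I would use classical hyperbolic theory to produce $u^{(N)}$: the operator $P^{(N)} := \frac{2t}{N}\partial_t^2 - \Delta_{x,y}$ is strictly hyperbolic on $[t_0,T]\times\mathbb{R}^{n+N}$ for $t_0>0$, so the Cauchy problem at $t=T$ with data $(h(x),0)$ admits a smooth solution by standard energy methods. Radial symmetry of $u^{(N)}$ in $y$ follows from the $SO(N)$-invariance of $P^{(N)}$ and the data, together with uniqueness.

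Second, let $w(s,x)$ denote the smooth backward heat solution with $w(T,x)=h(x)$, given by Gaussian convolution since $h\in C^\infty_0$. Define $\tilde u^{(N)}(t,x,y):= w(|y|^2/(2N),x)$. A direct computation using $\nabla_y[|y|^2/(2N)]=y/N$ and $\Delta_y[|y|^2/(2N)]=1$ gives $\Delta_y\tilde u^{(N)}=w_s+(|y|^2/N^2)w_{ss}$, hence
\begin{equation*}
P^{(N)}\tilde u^{(N)} \;=\; -(w_s+\Delta_x w)-\frac{|y|^2}{N^2}w_{ss} \;=\; -\frac{|y|^2}{N^2}\Delta_x^2 w,
\end{equation*}
after invoking the backward heat equation and $w_{ss}=\Delta_x^2 w$. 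This residual is $O(t/N)$ on the hypersurface $\{t=|y|^2/(2N)\}$, and by construction $\tilde u^{(N)}|_{t=|y|^2/(2N)}=w(t,x)$.

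The key step is to show $e^{(N)}:=u^{(N)}-\tilde u^{(N)}\to 0$ on the hypersurface as $N\to\infty$. Here $e^{(N)}$ satisfies $P^{(N)}e^{(N)}=(|y|^2/N^2)\Delta_x^2 w$ with Cauchy data $e^{(N)}(T,x,y)=h(x)-w(|y|^2/(2N),x)$, vanishing on the slice $|y|^2=2NT$, and $e^{(N)}_t(T,x,y)=0$. Two features of the geometry help. Changing variables to $\sigma:=t-|y|^2/(2N)$ rewrites the operator as
\begin{equation*}
P^{(N)} \;=\; \partial_\sigma-\Delta_x-\Delta_y \;+\; \frac{2\sigma}{N}\partial_\sigma^2 + \frac{2r}{N}\partial_r\partial_\sigma,
\end{equation*}
exhibiting a heat operator at leading order along $\sigma=0$ (the hypersurface), with the remaining terms suppressed by $N^{-1}$. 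Moreover, the integrated wave speed $\int_t^T\sqrt{N/(2s)}\,ds=\sqrt{2N}(\sqrt T-\sqrt t)$ equals exactly the radial distance from $|y|=\sqrt{2Nt}$ to $|y|=\sqrt{2NT}$, so the backward light cone from a hypersurface point $(t,x,y)$ is tangent at $t=T$ to the slice where the Cauchy data vanishes. A weighted energy estimate on a thin $\sigma$-slab around the hypersurface, using the parabolic scaling $|y|\sim\sqrt N$, should then yield the decay of $e^{(N)}$ on $\{\sigma=0\}$.

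The main obstacle is making this energy estimate go through: the Cauchy data $h(x)-w(|y|^2/(2N),x)$ is $O(1)$ through the interior of the backward domain of dependence and vanishes only on its boundary, so a naive $L^\infty$-bound cannot give decay. Success requires exploiting the near-heat structure in $\sigma$-coordinates, reading $P^{(N)}$ above as a singular perturbation of a parabolic operator, and choosing weights adapted to the parabolic scaling. Once convergence is established, the limit $u(t,x)=w(t,x)$ solves the backward heat equation by construction, and its smoothness on $[t_0,T]\times\mathbb{R}^n$ follows from that of $w$.
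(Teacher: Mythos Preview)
Your route is genuinely different from the paper's. The paper does not build an approximate solution or run an error estimate at all. Instead it (i) obtains $u^{(N)}$ from H\"ormander's strictly hyperbolic existence theory and averages over $SO(N)$ to make it radial, (ii) argues via a chain-rule computation that the restriction of $u^{(N)}$ to the hypersurface $\{t=|y|^2/(2N)\}$ satisfies the backward heat equation \emph{exactly for every fixed $N$}, and (iii) computes the bicharacteristics of $p^{(N)}=\frac{2t}{N}\tau^2-|\xi|^2-|\eta|^2$ explicitly and introduces a limiting wavefront set $WF_\infty$ to conclude that all singularities escape to spatial infinity as $N\to\infty$, so the pointwise limit exists everywhere on $[t_0,T]\times\mathbb{R}^n_x$. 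What the microlocal argument buys is that one never needs quantitative control of $u^{(N)}-w$: regularity is handled by propagation of singularities rather than by energy norms. What your approach would buy, if it closed, is an explicit rate of convergence and a mechanism more robust to lower-order perturbations.

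The gap you flag, however, is real and is not bridged by the $\sigma$-coordinate remark. The Cauchy datum $e^{(N)}(T,x,y)=h(x)-w(|y|^2/(2N),x)$ is $O(1)$ throughout the interior of the backward dependence cone of any hypersurface point; the tangency you compute only says the cone \emph{touches} the zero set of the data on its boundary sphere $|y|=\sqrt{2NT}$, not that the data is small inside the cone. A standard energy inequality for $P^{(N)}$ therefore yields $e^{(N)}=O(1)$, not $o(1)$. Rewriting $P^{(N)}$ in the variable $\sigma=t-|y|^2/(2N)$ exhibits a parabolic part plus $\frac{2\sigma}{N}\partial_\sigma^2+\frac{2r}{N}\partial_r\partial_\sigma$, but the extra piece is of \emph{higher} order in $\partial_\sigma$ than the leading part, so you are in a genuine singular-perturbation regime with possible boundary layers in $\sigma$; ``weights adapted to parabolic scaling'' names the difficulty rather than resolves it. Without a concrete weighted norm in which both the $O(1)$ data and the second-order $\sigma$-term are simultaneously small, the final step remains a program rather than a proof.
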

\begin{remark} To prove this, we first need to show the existence of the solution of the Cauchy problem of the wave equation; we then need a notion of limit of wavefront sets $ WF(u^{(N)}) $ when $ N \rightarrow \infty $; we also need a result corresponds to the rotation-invariance of the Euclidean Laplacian $ \Delta_{y} $.
\end{remark}
\begin{remark} No matter in Euclidean case or Riemannian case, the existence of the solution of variable coefficient wave equation can only guarantee a weak solution in the distributional sense. In Theorem 4.1, we would like to have a result where the limit is obtained pointwisely. To this end, we would like the solution of (\ref{seventeen}) to have enough regularity, in particular, $ \mathcal{C}^{\infty} $ solution here, and this follows when the solution $ u $ of $ (\ref{seventeen}) $ satisfying $ u \notin \text{singsupp(u)} $, and specifically, it is outside $ WF(u) $.
\end{remark}
The following theorem states that variable coefficient wave equation $ Pu = f $ has a solution with some Sobolev regularity in Euclidean case, if we require the operator $ P $ to have following properties.
\begin{definition} {\bf \cite{Hor3}, Section 23.2.} We say that the operator $ P = \sum_{j = 0}^{m} P(t, x, D_{x}) D_{t}^{j} $ in $ \mathbb{R} \times \mathbb{R}^{n} $ is strictly hyperbolic and homogeneous if \\
(i) $ P_{m} = 1 $ and $ p_{j} \in S^{m - j} (\mathbb{R}^{n+1} \times \mathbb{R}^{n}) $ when $ j < m $; \\
(ii) $ P_{j} $ has a principal symbol $ p_{j}(t, x, \xi) $ which is homogeneous in $ \xi $ of degree $ m - j $; thus $ P_{j}(t, x, \xi) - (1 - \chi(\xi)) p_{j}(t, x, \xi) \in S^{m - 1 - j} $ if $ \chi \in \mathcal{C}_{c}^{\infty}(\mathbb{R}^{n}) $ is equal to $ 1 $ in a neighborhood of $ 0 $; \\
(iii) the zeros $ \tau $ of the principal symbol of $ P $
\begin{equation*}
p(t, x, \tau, \xi) = \sum_{0}^{m} p_{j}(t, x, \xi) \tau^{i}
\end{equation*}
\noindent are real when $ \xi \neq 0 $, and they are uniformly simple in the sense that for some positive constant $ c $
\begin{equation*}
\lvert \frac{\partial p(t, x, \tau, \xi)}{\partial \tau} \rvert \geqslant c \lvert \xi \rvert^{m - 1} \; if \; p(t, x, \tau, \xi) = 0.
\end{equation*}
\end{definition}
\begin{theorem} {\bf \cite{Hor3}, Theorem 23.2.2.} Assume that the hypotheses in Definition 4.1 are fulfilled. For arbitrary $ f \in L^{1}([0, T]; H^{s}) $ and $ \phi_{j} \in H^{s + m - 1 - j}, j < m $, there is then a unique solution $ u \in \bigcap_{0}^{m - 1} C^{j}([0, T]; H^{s + m - 1 - j}) $ of the Cauchy problem
\begin{equation*}
Pu = f \; if \; 0 < t < T; D_{t}^{j}u = \phi_{j} \; for \; j < m, \; if \; t = 0.
\end{equation*}
\end{theorem}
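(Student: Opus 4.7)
The plan is to establish existence and uniqueness through energy estimates that exploit the strict hyperbolicity hypothesis, and then bootstrap from $L^2$ to the claimed Sobolev scales. First I would reduce the $m$-th order scalar equation to an equivalent first-order $m \times m$ pseudodifferential system in the unknown $U = (\Lambda^{m-1} u,\, \Lambda^{m-2} D_t u,\, \ldots,\, D_t^{m-1} u)^T$, where $\Lambda = (1 - \Delta_x)^{1/2}$, so the system takes the form $D_t U = A(t, x, D_x) U + F$ with $A$ a first-order pseudodifferential matrix whose principal symbol has real, uniformly simple eigenvalues $\lambda_1(t,x,\xi), \ldots, \lambda_m(t,x,\xi)$ on $|\xi| \geq 1$. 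This reduction converts the original Cauchy problem into a system whose well-posedness in $H^s$ will follow once a microlocal symmetrizer is in hand.

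The core step is then the construction of a pseudodifferential symmetrizer $S(t, x, D_x)$ of order zero that is self-adjoint, uniformly positive definite modulo a smoothing remainder, and for which the commutator $SA - A^*S$ has order zero rather than order one. The construction is essentially symbolic: strict hyperbolicity and the uniform simplicity $|\partial_\tau p| \geq c|\xi|^{m-1}$ guarantee that the eigenprojectors $\Pi_j(t,x,\xi)$ of the principal symbol depend smoothly on parameters in conic neighborhoods, and one checks that $\sum_j \Pi_j^* \Pi_j$ furnishes a local symbol for $S$ because $\Pi_j \Pi_k = \delta_{jk} \Pi_j$ and the $\lambda_j$ are real; a partition of unity in $(t, x, \xi/|\xi|)$ glues these local choices into a global symbol. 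Applying the sharp G\aa{}rding inequality to $\langle S U, U \rangle_{L^2}$ and using the equation yields a differential inequality
\begin{equation*}
\frac{d}{dt} \langle S U, U \rangle_{L^2} \leq C \langle S U, U \rangle_{L^2} + C \lVert F(t) \rVert_{L^2} \lVert U(t) \rVert_{L^2},
\end{equation*}
from which Gr\"onwall's lemma gives the basic $L^2$ energy estimate bounding $\sup_{t \in [0,T]} \lVert U(t) \rVert_{L^2}$ by a constant multiple of $\lVert U(0) \rVert_{L^2} + \lVert F \rVert_{L^1_t L^2}$. Upgrading to $H^s$ is then done by commuting $\Lambda^s$ through the equation; the commutator is of lower order and can be absorbed on the right-hand side.

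Uniqueness is immediate from the energy estimate applied to the difference of two candidate solutions sharing the same Cauchy data and forcing. For existence I would proceed by symbol regularization: mollify the coefficients of $P$ in $x$ and cut off high frequencies to obtain smoothed operators $P_\epsilon$ for which the Cauchy problem reduces to a family of ordinary differential equations on frequency shells; the a priori estimates of the previous step are uniform in $\epsilon$, so one passes to a weak limit. Alternatively, one can invoke duality/Hahn--Banach: the adjoint $P^*$ is again strictly hyperbolic (with time reversed) and therefore satisfies the analogous energy estimate backwards in $t$, which yields a distributional solution of $Pu = f$ matching the prescribed initial conditions $D_t^j u|_{t=0} = \phi_j$ for $j < m$.

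The final claim $u \in \bigcap_{j=0}^{m-1} C^j([0,T]; H^{s+m-1-j})$ comes from the equation itself: once one knows $D_t^j u \in L^\infty([0,T]; H^{s+m-1-j})$ for $j < m$, the identity $D_t^m u = f - \sum_{j<m} P_j(t,x,D_x) D_t^j u$ places $D_t^m u$ in $L^1_t H^s$, which upgrades $D_t^{m-1} u$ from $L^\infty$-in-$t$ to $C^0$-in-$t$ valued in $H^s$, and continuity of the lower time-derivatives at the higher Sobolev levels follows by successive integration. The principal obstacle in this program is the construction of the symmetrizer together with the precise commutator and G\aa{}rding estimates needed to close the $H^s$ energy inequality without loss of derivatives; this is exactly where \emph{uniform} simplicity of the characteristic roots, rather than mere pointwise simplicity, becomes essential.
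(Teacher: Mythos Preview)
Your sketch is a reasonable outline of the standard symmetrizer-based proof of H\"ormander's Theorem 23.2.2, and indeed this is essentially how the argument runs in \cite{Hor3}: reduce to a first-order system, build a zeroth-order symmetrizer from the eigenprojectors (which exist smoothly thanks to uniform simplicity of the roots), close an $L^2$ energy inequality via G\aa{}rding and Gr\"onwall, conjugate by $\Lambda^s$ for the general Sobolev scale, and obtain existence by duality with the adjoint problem.

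However, there is nothing in the present paper to compare your argument against. The paper does not prove this theorem; it merely \emph{cites} it as Theorem 23.2.2 from H\"ormander's treatise and immediately uses it as a black box (see the remark following the statement) to guarantee that the Cauchy problem \eqref{seventeen} has a solution with the required regularity. So while your proposal is correct as a proof strategy for the cited result, it is not a proof the paper omitted or sketched differently---the paper simply appeals to the reference and moves on.
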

\begin{remark} The above theorem guarantees a global existence of the strong solution of (\ref{seventeen}) outside the wavefront set, which is good enough for us. Any standard PDE book provides the existence of the solution for second order variable coefficient hyperbolic PDEs, e.g. Evans book \cite{Evans}. There is another way to understand the operator $ P^{(N)} $. Within the space $ [t_{0}, T] \times \mathbb{R}_{x}^{n} \times (\mathbb{R}_{y}^{N} \backslash \lbrace 0 \rbrace) $, the operator could be observed as
\begin{equation*}
\tilde{P}^{(N)} = \frac{\partial^{2}}{\partial t^{2}} - \frac{N^{2}}{\lvert y \rvert^{2}} \Delta_{x, y},
\end{equation*}
\noindent by (\ref{fifteen}), and it is clear that $ P^{(N)} u = \tilde{P}^{(N)} u = 0 $. It follows from Chapter 18 of \cite{Hor3} that we could apply Hadamard construction to construct a paramatrix of $ \tilde{P}^{(N)} $ for the reason that $ - \frac{N^{2}}{\lvert y \rvert^{2}} \Delta_{x, y} $ is an elliptic operator. This paramatrix is a global version, and hence we can solve (\ref{seventeen}) by using this paramatrix.
\end{remark}
For each $ N $, the equation (\ref{seventeen}) has a set of singularity, in which the solution either does not exist or does not have real function representation, i.e., can only have distributional solution in the weak sense. The singularity of solutions of linear partial differential equations can be described by wavefront set. \\
\\
For a distribution $ u $ whose domain is some manifold $ M $, the wavefront set is defined to be
\begin{equation*}
WF(u) = \lbrace (x, \xi) \in T^{*}M : \xi \in \Sigma_{x}u \rbrace.
\end{equation*}
\noindent Here $ \Sigma_{x} u $, defined to be the singular fiber, is the complement of collection of elements $ \xi \in T_{x}^{*}M $ such that the Fourier transform of $ u $ localized at $ x $ has sufficient decay in a conical neighborhood of $ \xi $. \\
\\
Next theorem, named H\"ormander's propagation theorem, due to \cite{Hor3}, \cite{RBM}, explained what is the wavefront set looking like for any distribution $ u $, especially when $ u $ solves some partial differential equations $ Pu = f $.\\
\begin{theorem} {\bf \cite{RBM}, H\"ormander's Propagation Theorem.} Let $ M $ be a smooth manifold. If $ P \in \Psi^{m}(M) $ has real principal symbol and is properly supported then for any distribution $ u \in \mathcal{C}^{-\infty}(M) $,
\begin{equation*}
WF(u) \backslash WF(Pu) \subset \Sigma(P).
\end{equation*}
\noindent is a union of maximally null bicharacteristics in $ \Sigma(P) \backslash WF(Pu) $.
\end{theorem}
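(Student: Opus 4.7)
The plan is to prove this by the positive commutator method of Hörmander, which is the standard microlocal technique for propagating regularity along the Hamilton flow of a real principal symbol. The statement has two distinct parts that I would treat separately: first, the inclusion $WF(u)\setminus WF(Pu)\subset \Sigma(P)$, which is just microlocal elliptic regularity, and second, the invariance of $WF(u)\setminus WF(Pu)$ under the null bicharacteristic flow of $p=\sigma_m(P)$ inside $\Sigma(P)\setminus WF(Pu)$.

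For the elliptic part, I would argue as follows. Suppose $(x_0,\xi_0)\notin \Sigma(P)$, so $p(x_0,\xi_0)\neq 0$. Then there is a conic neighborhood $\Gamma$ of $(x_0,\xi_0)$ on which $p$ is elliptic, and I can construct a microlocal parametrix $Q\in\Psi^{-m}(M)$ supported in $\Gamma$ with $QP=I+R$ where $R\in\Psi^{-\infty}$ microlocally near $(x_0,\xi_0)$. Applying $Q$ to $Pu$ gives $u = Q(Pu)-Ru$ microlocally, so $(x_0,\xi_0)\notin WF(u)$ whenever $(x_0,\xi_0)\notin WF(Pu)$. This is the easy half.

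For the propagation half, I would fix a null bicharacteristic $\gamma:(a,b)\to T^*M\setminus 0$ of $p$ and a point $\gamma(t_1)\in \Sigma(P)\setminus WF(Pu)$ with $\gamma(t_1)\notin WF(u)$, then show that $\gamma(t)\notin WF(u)$ for all $t$ in the connected component of $\gamma^{-1}(\Sigma(P)\setminus WF(Pu))$ containing $t_1$. The key step is to construct, for each $s\in\mathbb{R}$ and each sufficiently short subarc of $\gamma$, a commutant $A=A^*\in \Psi^{s-(m-1)/2}(M)$ whose symbol $a$ is supported in a small conic neighborhood of the subarc and satisfies a differential inequality of the form
\begin{equation*}
H_p(a^2) = -b^2 + e,
\end{equation*}
where $b$ is elliptic on the subarc and $e$ is supported near the ``past'' endpoint. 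Then the operator identity
\begin{equation*}
\tfrac{1}{2i}\bigl([P,A^*A]u,u\bigr) = \operatorname{Im}(A^*APu,u) - \operatorname{Im}(Pu,A^*Au),
\end{equation*}
combined with the sharp Gårding inequality applied to the principal symbol $H_p(a^2)$, converts the sign of $-b^2+e$ into an $H^s$ estimate of the form $\|Bu\|_{H^{s}} \le C\|Eu\|_{H^s} + C\|APu\|_{H^{s-m+1}} + C\|u\|_{H^{s-1/2}}$ after absorbing lower-order errors. Under the assumption that $u$ is already microlocally $H^s$ where $e$ is elliptic and that $Pu$ has no wavefront along the subarc, this gives $u\in H^s$ microlocally where $b$ is elliptic, i.e.\ along the next piece of bicharacteristic.

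The main obstacle, and the technical heart of the argument, will be the construction of the commutant with the correct commutator symbol and the careful bookkeeping of subprincipal and lower-order terms so that the sharp Gårding step actually closes. In particular, choosing $a(x,\xi)$ as a product of a homogeneous escape function $\chi(t(x,\xi))$ along the bicharacteristic parameter $t$ with a cutoff localizing transversally to $\gamma$ requires verifying homogeneity, proper support, and that the transverse cutoff is preserved (to leading order) under $H_p$; I would handle this by first straightening $\gamma$ to a coordinate axis via a homogeneous canonical transformation, reducing $P$ microlocally to $D_{x_1}+\text{lower order}$ by a Fourier integral operator conjugation, and then exhibiting the commutant explicitly in the model. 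Iterating the short-arc estimate along a compact subarc of $\gamma$, and then bootstrapping $s$, yields the maximal-bicharacteristic statement. Finally, to conclude that $WF(u)\setminus WF(Pu)$ is a union of \emph{maximally null} bicharacteristics in $\Sigma(P)\setminus WF(Pu)$, I would note that the argument is symmetric in $\pm H_p$ (the sign of the escape function may be reversed), so the regularity propagates both forward and backward until the bicharacteristic either leaves $\Sigma(P)$ or enters $WF(Pu)$.
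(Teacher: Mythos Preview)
Your proposal outlines the standard positive--commutator proof of H\"ormander's propagation of singularities theorem, and the sketch is essentially correct: microlocal elliptic regularity for the inclusion $WF(u)\setminus WF(Pu)\subset\Sigma(P)$, then a commutator/escape-function argument (or equivalently the FIO reduction to $D_{x_1}$) for invariance under the $H_p$--flow. This is precisely the approach taken in the references the paper cites (Melrose's notes and H\"ormander, Vol.~III).

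However, there is nothing to compare against: the paper does \emph{not} prove this theorem. It is quoted as a black box from \cite{RBM} and used only as a tool to locate $WF(u^{(N)})$ inside $\Sigma(P^{(N)})$ and to justify that the wavefront set is carried by integral curves of $H_{p^{(N)}}$. So your proposal is not an alternative to the paper's argument; it is a proof of a cited external result that the paper simply invokes.
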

Since we would like to understand the asymptotic behavior of $ \lbrace u^{(N)} \rbrace $, it is natural to ask what will happen for the sequence of sets $ \lbrace WF(u^{(N)}) \rbrace $ when $ N $ approaches to infinity. \\
\\
In general, the wavefront set for $ u \in \mathcal{C}^{-\infty}(M) $ with some smooth manifold $ M $ is sitting in $ WF(u) \subset T^{*}M $. Since in our case we are discussing the asymptotic behavior of the sequence of sets $ \lbrace WF(u^{(N)}) \rbrace $ where each $ WF(u^{(N)}) \subset T^{*}([t_{0}, T] \times \mathbb{R}_{x}^{n} \times \mathbb{R}_{y}^{N}) $, it indicates that we should construct a limiting space which contains all $ \mathbb{R}_{y}^{N} $ when $ N \rightarrow \infty $ and thus we can discuss the limit of wavefront sets in the ambient space. \\
\\
Note that the Euclidean metric in each $ \mathbb{R}_{y}^{N} $ is the same as the $ l^{2} $ - norm with the canonical identification that $ \mathbb{R}_{y}^{N} \cong T_{y_{0}}\mathbb{R}_{y}^{N} $ at each point. We now set the limiting space, which can be denoted as $ \mathbb{R}_{y}^{\infty} : = \bigoplus_{i=1}^{\infty} \mathbb{R} $, the direct sum of countably copies of $ \mathbb{R} $. It is exactly the set of collection of countable sequence $ (y_{1}, \dotso, y_{n}, \dotso) $ such that all but finitely many elements are zero. $ \mathbb{R}_{y}^{\infty} $ is a pre-Hilbert space with the $ l^{2} $ - norm also. Define
\begin{align*}
\phi_{N} & : \mathbb{R}_{y}^{N} \rightarrow \mathbb{R}_{y}^{\infty}, (y_{1}, \dotso, y_{N}) \mapsto (y_{1}, \dotso, y_{N}, 0, 0, \dotso); \\
\pi_{N} & : \mathbb{R}_{y}^{\infty} \rightarrow \mathbb{R}_{y}^{N} , (y_{1}, \dotso, y_{N}, y_{N+1}, \dotso) \mapsto (y_{1}, \dotso, y_{N}).
\end{align*}
\noindent From the discussion above, we can see that each $ \phi_{N} $ is an isometric embedding from $ \mathbb{R}_{y}^{N} $ to $ \mathbb{R}_{y}^{\infty} $, and it follows that
\begin{equation*}
Id_{t} \times Id_{x} \times \phi_{N} : \mathbb{R}_{t} \times \mathbb{R}_{x}^{n} \times \mathbb{R}_{y}^{N} \rightarrow \mathbb{R}_{t} \times \mathbb{R}_{x}^{n} \times \mathbb{R}_{y}^{\infty},
\end{equation*}
\noindent is an isometric embedding for each $ N $. This setup allows us to discuss the closeness between two points in two different spaces $ \mathbb{R}_{t} \times \mathbb{R}_{x}^{n} \times \mathbb{R}_{y}^{N_{1}} $ and $ \mathbb{R}_{t} \times \mathbb{R}_{x}^{n} \times \mathbb{R}_{y}^{N_{2}} $: we embed points in two different spaces to the same limiting space $ \mathbb{R}_{t} \times \mathbb{R}_{x}^{n} \times \mathbb{R}_{y}^{\infty} $ with the distance function induced by the products of standard norm in time axis, in $ \mathbb{R}_{x}^{n} $ and the $ l^{2} $ - norm in $ \mathbb{R}_{y}^{\infty} $. \\
\\
In our case, $ (t, x, y, \tau, \xi, \eta) \in WF(u^{(N)}) \subset T^{*}(\mathbb{R}_{t} \times \mathbb{R}_{x}^{n} \times \mathbb{R}_{y}^{N}) \cong (\mathbb{R}_{t} \times \mathbb{R}_{x}^{n} \times \mathbb{R}_{y}^{N})^{2} $. In this setup, we write down the distance function to be
\begin{align*}
\lvert (t_{1}, x_{1}, y_{1}, \tau_{1}, \xi_{1}, \eta_{1}) - (t_{1}, x_{1}, y_{1}, \tau_{1}, \xi_{1}, \eta_{1}) \rvert_{WF_{\infty}} & = \lvert t_{1} - t_{2} \rvert_{\mathbb{R}_{t}}  + \lvert x_{1} - x_{2} \rvert_{\mathbb{R}_{x}^{n}} + \lvert y_{1} - y_{2} \rvert_{l_{\infty}^{2}} + \\
& + \lvert \tau_{1} - \tau_{2} \rvert_{\mathbb{R}_{t}}  + \lvert \xi_{1} - \xi_{2} \rvert_{\mathbb{R}_{x}^{n}} + \lvert \eta_{1} - \eta_{2} \rvert_{l_{\infty}^{2}}.
\end{align*}
\noindent Here $ \lvert \cdot \rvert_{l_{\infty}^{2}} $ is the norm on $ \mathbb{R}_{y}^{\infty} = \bigoplus_{i=1}^{\infty} \mathbb{R} $. \\
\\
With the canonical isomorphism between Euclidean space and its cotangent space, we can apply the same limiting space on cotangent space of $ \mathbb{R}_{y}^{N} $ and define as follows:
\begin{definition}
Given the sequence of spaces $ \mathbb{R}_{t} \times \mathbb{R}_{x}^{n} \times \mathbb{R}_{y}^{N} $, we define the limit of the sequence of wavefront sets $ WF(u^{(N)}) : = WF_{N} $, where $ u^{(N)} $ is a distribution on $ \mathbb{R}_{t} \times \mathbb{R}_{x}^{n} \times \mathbb{R}_{y}^{N} $, to be
\begin{equation}\label{nineteen}
WF_{\infty} : = \lim_{N \rightarrow \infty} WF_{N}, WF_{\infty} \subset (\mathbb{R}_{t} \times \mathbb{R}_{x}^{2} \times \mathbb{R}_{y}^{\infty})^{2}.
\end{equation}
\noindent We say $ (t, x, y, \tau, \xi, \eta) \in WF_{\infty} $ if: \\
(i)for every $ \epsilon > 0 $, there exists a $ N_{0} \in \mathbb{N} $ such that
\begin{equation*}
\lvert (t, x, y, \tau, \xi, \eta) - (t^{(N)}, x^{(N)}, y^{(N)}, \tau^{(N)}, x^{(N)}, \eta^{(N)}) \rvert_{WF_{\infty}} < \epsilon, \forall N \geqslant N_{0}.
\end{equation*}
\noindent or (ii) for every positive number $ M > 0 $, there exists $ N_{0} \in \mathbb{N} $ such that
\begin{equation*}
\lvert t^{(N)}, x^{(N)}, y^{(N)}, \tau^{(N)}, x^{(N)}, \eta^{(N)} \rvert_{WF_{\infty}} > M, \forall N \geqslant N_{0}.
\end{equation*}
\noindent In case (ii), the limit $ (t, x, y, \tau, \xi, \eta) $ has some or all entries to be infinity. \\
\par
Here $ (t, x, y, \tau, \xi, \eta) \in (\mathbb{R}_{t} \times \mathbb{R}_{x}^{n} \times \mathbb{R}_{y}^{\infty})^{2} $, $ (t^{(N)}, x^{(N)}, y^{(N)}, \tau^{(N)}, x^{(N)}, \eta^{(N)}) \in (\mathbb{R}_{t} \times \mathbb{R}_{x}^{n} \times \mathbb{R}_{y}^{N})^{2} \hookrightarrow (\mathbb{R}_{t} \times \mathbb{R}_{x}^{n} \times \mathbb{R}_{y}^{\infty})^{2} $, and the distance function $ \lvert \cdot \rvert $ on $ (\mathbb{R}_{t} \times \mathbb{R}_{x}^{n} \times \mathbb{R}_{y}^{\infty})^{2} $ is induced by the norm discussed above.
\end{definition}
\begin{remark}
The definition of $ WF_{\infty} $ indicates a concept of essential singularities for the sequence of distributions, telling us which points, especially the pair $ (t, x) $ in the original space, cannot be maneuvered by embedding into a larger space. Equivalently, it says that all singularities resulted from geometry of the space in $ y $ are removed, when we send $ N \rightarrow \infty $.
\end{remark}
\begin{remark}
The above discussion and Definition 4.2 also works when we apply them to the space $ [t_{0}, T] \times \mathbb{R}_{x}^{n} \times \mathbb{R}_{y}^{N} $ and the cotangent spaces for each $ N $ are the same, so is for the limiting space.
\end{remark}
To show the theorem, we need the proposition below, which states the rotational invariance of Euclidean Laplacian.
\begin{proposition} On the Euclidean space $ \mathbb{R}^{n} $ the Euclidean Laplacian $ \Delta_{x} = \sum_{i=1}^{n} \frac{\partial^{2}}{\partial x_{i}^{2}} $ is rotationally invariant in the sense that for $ u : \mathbb{R}^{n} \rightarrow \mathbb{R}, v(x) = u(Ax), \forall A \in SO(n) $, we have
\begin{equation*}
\Delta_{x} v(x) = \Delta_{y} u(y), \text{where} \; y = Ax.
\end{equation*}
\end{proposition}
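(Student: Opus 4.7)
The plan is to compute $\Delta_x v$ directly via the chain rule and then use the defining property $A A^{T} = I$ of $A \in SO(n)$. The only ingredient beyond calculus is the orthogonality relation $\sum_{k} A_{ik} A_{jk} = \delta_{ij}$, which is what converts the mixed second-derivative sum into a diagonal one.

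Concretely, I would write $y = Ax$ in components as $y_i = \sum_j A_{ij} x_j$, so that $\partial y_i / \partial x_k = A_{ik}$. Applying the chain rule to $v(x) = u(Ax)$ gives
\begin{equation*}
\frac{\partial v}{\partial x_k}(x) = \sum_{i=1}^{n} A_{ik} \frac{\partial u}{\partial y_i}(Ax),
\end{equation*}
and differentiating a second time yields
\begin{equation*}
\frac{\partial^{2} v}{\partial x_k^{2}}(x) = \sum_{i,j=1}^{n} A_{ik} A_{jk} \frac{\partial^{2} u}{\partial y_i \partial y_j}(Ax).
\end{equation*}
Summing over $k$ and interchanging the order of summation gives
\begin{equation*}
\Delta_x v(x) = \sum_{i,j=1}^{n} \Bigl( \sum_{k=1}^{n} A_{ik} A_{jk} \Bigr) \frac{\partial^{2} u}{\partial y_i \partial y_j}(Ax).
\end{equation*}

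Since $A \in SO(n)$ satisfies $A A^{T} = I$, the inner sum equals $\delta_{ij}$, so the double sum collapses to $\sum_{i} \partial^{2} u / \partial y_i^{2}$ evaluated at $y = Ax$, which is exactly $\Delta_y u(y)$. There is no real obstacle here; the statement is essentially the coordinate-invariance of the Laplacian under the Euclidean metric (equivalently, the fact that the symbol $|\xi|^{2}$ of $-\Delta$ is invariant under the action of $O(n)$ on $\xi$). One only needs $u \in \mathcal{C}^{2}$ for the chain-rule manipulations to be classically valid; for distributional $u$ the same identity holds by duality against test functions, which is the version we will actually apply when combining with Theorem 4.1 to exploit $SO(N)$-symmetry of $u^{(N)}$ in the $y$-variables.
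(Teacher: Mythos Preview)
Your proof is correct and essentially identical to the paper's: both compute $\partial v/\partial x_k$ and $\partial^2 v/\partial x_k^2$ by the chain rule, sum over $k$, and collapse the mixed second derivatives using the orthogonality relation $\sum_k A_{ik}A_{jk}=\delta_{ij}$ from $AA^T=I$. The only difference is cosmetic index labeling.
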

\begin{proof} Denote $ A = (A_{ij}) $, we compute that
\begin{equation*}
\frac{\partial v(x)}{\partial x_{i}} = \frac{\partial u(y)}{\partial x_{i}} = \sum_{j=1}^{n} \frac{\partial u(y)}{\partial y_{j}} \frac{\partial y_{j}}{\partial x_{i}} = \sum_{j = 1}^{n} \frac{\partial u(y)}{\partial y_{j}} A_{ji}.
\end{equation*}
\noindent Taking one more derivative we observe that
\begin{equation*}
\frac{\partial^{2} v(x)}{\partial x_{i}^{2}} = \frac{\partial}{\partial x_{i}} (\sum_{j = 1}^{n} \frac{\partial u(y)}{\partial y_{j}} A_{ji}) = \sum_{k = 1}^{n} \sum_{j = 1}^{n} \frac{\partial^{2} u(y)}{\partial y_{j} y_{k}} A_{ji} A_{ki}.
\end{equation*}
\noindent Since $ A \in SO(n) $, hence we have
\begin{equation*}
I_{n} = AA^{T} \Rightarrow \sum_{i=1}^{n} A_{ji} A_{ki} = (AA^{T})_{kj} = \delta_{kj}.
\end{equation*}
\noindent Hence it follows that
\begin{equation*}
\Delta_{x} v(x) = \sum_{i=1}^{n} \sum_{k = 1}^{n} \sum_{j = 1}^{n} \frac{\partial^{2} u(y)}{\partial y_{j} y_{k}} A_{ji} A_{ki} = \sum_{j = 1}^{n} \frac{\partial^{2} u(y)}{\partial y_{j}^{2}} = \Delta_{y} u(y).
\end{equation*}
\end{proof}
Now we can proof our main theorem.

\begin{proof} {\bf Proof of Theorem 4.1.} To conclude (\ref{seventeen}) has a solution in the weak sense, we check that (\ref{seventeen}) satisfies all hypothesis in Theorem 4.2. Since $ t \in [t_{0}, T] \subset (0, \infty) $ and $ t_{0} > 0 $ strictly, we can rewrite (\ref{seventeen}) as
\begin{equation}\label{one-and-one}
\frac{2t}{N} \cdot \frac{\partial^{2}}{\partial t^{2}} u - \Delta_{x, y}u = 0 \Leftrightarrow \frac{\partial^{2}}{\partial t^{2}} u - \frac{N}{2t} \Delta_{x, y} u = 0 \Leftrightarrow D_{t}^{2} u - \frac{N}{2t} (\sum_{i=1}^{n} D_{x_{i}}^{2} + \sum_{j=1}^{N} D_{y_{j}}^{2}) u : = \tilde{P}^{(N)} u = 0.
\end{equation} 
\noindent It follows clearly that if we write $ \tilde{P}^{(N)} = \sum_{j=0}^{2} P_{j}(t, x, y, D_{x, y})D_{t}^{j} $, we have $ P^{2} = 1 $, $ P_{1} = 0 $ and $ P_{0} \in S^{2}(\mathbb{R}^{n + N + 1} \times \mathbb{R}^{n + N}) $, so (i) is satisfied. \\
\\
Consequently, we observe that $ p_{0}(t, x, y, \xi, \eta) = \frac{t}{2N} (\lvert \xi \rvert^{2} + \lvert \eta \rvert^{2}) $ and clearly (ii) is hold. \\
\\
Lastly, we observe that the principal symbol of $ \tilde{P}^{(N)} $, denoted by $ \tilde{p}^{(N)} $, is of the form
\begin{equation*}
\tilde{p}^{(N)} = \tau^{2} - \frac{t}{2N}(\lvert \xi \rvert^{2} + \lvert \eta \rvert^{2}).
\end{equation*}
\noindent Clearly it has real solutions when $ \lvert \xi \rvert + \lvert \eta \rvert \neq 0 $ and it is uniformly simple, since we have
\begin{equation*}
\frac{\partial \tilde{p}^{(N)}}{\partial \tau} = 2 \tau = 2 \cdot \sqrt{\frac{t}{2N}(\lvert \xi \rvert^{2} + \lvert \eta \rvert^{2})}.
\end{equation*}
\noindent And thus the inequality in (iii) of Definiition 4.1 is satisfied, since $ t \geqslant t_{0} > 0 $. \\
\\
Hence we conclude that, by (\ref{one-and-one}), the wave equation (\ref{seventeen}) has a weak solution in the sense of Theorem 4.2. Furthermore, we conclude by Theorem 4.3 that (\ref{seventeen}) has a smooth solution outside $ WF(u) $. \\
\\
We first show that for each $ N $, the limit does exist and the projection of the limit onto $ \mathbb{R}_{t} \times \mathbb{R}_{x}^{n} $ solves the backward heat equation (\ref{eighteen}) strongly. To do this, we need to construct a strong solution of (\ref{seventeen}) that is rotationally invariant in $ y $, say $ u^{(N)}(t, x, y) = u^{(N)}(t, x, \lvert y \rvert) $. Thus alongside each $ \mathbb{S}^{N} $ fibers with the relation $ t = \frac{\lvert y \rvert^{2}}{2N} $, $ u^{(N)}(t, x, \lvert y \rvert) = u^{(N)}(t, x, \sqrt{2Nt}) $ which allows us to consider the limit on the space $ [t_{0}, T] \times \mathbb{R}_{x}^{n} $. \\
\\
Given $ \tilde{u}^{(N)}(t, x, y) $ that solves (\ref{seventeen}), we consider $ \tilde{u}^{(N)}(t, x, Ay), A \in SO(N) $. Observe that
\begin{align*}
& \text{ $ \tilde{u}^{(N)}(t, x, y) $ solves (\ref{seventeen})} \Rightarrow \frac{2t}{N} \partial_{tt} \tilde{u}^{(N)}(t, x, y) - \Delta_{x}\tilde{u}^{(N)}(t, x, y) - \Delta_{y}\tilde{u}^{(N)}(t, x, y) = 0 \\
\Rightarrow & \frac{2t}{N} \partial_{tt} \tilde{u}^{(N)}(t, x, y) - \Delta_{x}\tilde{u}^{(N)}(t, x, y) - \Delta_{y}\tilde{u}^{(N)}(t, x, y) = 0, \text{$ y $ is a dummy variable} \\
\Rightarrow & \frac{2t}{N} \partial_{tt} \tilde{u}^{(N)}(t, x, y) - \Delta_{x}\tilde{u}^{(N)}(t, x, y) - \Delta_{y}\tilde{u}^{(N)}(t, x, y) = 0, \text{by Proposition 4.1}.
\end{align*}
\noindent Hence we conclude that $ \tilde{u}^{(N)}(t, x, Ay) $ solves (\ref{seventeen}), for all $ A \in SO(N) $, by Proposition 4.1. \\
\\
Since $ SO(N) $ is a compact Riemannian manifold, we can define
\begin{equation}\label{twenty}
u^{(N)}(t, x, y) =\frac{1}{\text{Vol}(SO(N))} \int_{SO(N)} \tilde{u}^{(N)}(u, x, Ay) d\mu_{N},
\end{equation}
\noindent where $ d\mu_{N} $ is the volume form on $ SO(N) $. \\
\par
It follows immediately that $ u^{(N)}(t, x, y) $ defined above is rotationally invariant, since it is the average over $ SO(N) $. Since $ SO(N) $ is a compact manifold, Dominated Convergence Theorem allows us to pass derivatives over the integration and hence we conclude that $ u^{(N)}(t, x, y) $ defined in (\ref{twenty}) solves (\ref{seventeen}). In particular, (\ref{twenty}) is constant along the hypersurface $ t = \lvert y \rvert^{2} \slash 2N $. \\
\par
We now discuss the wavefront set of $ u^{(N)} $ defined in (\ref{nineteen}). The issue for this definition is that we would like to make sure that $ u^{(N)} $ is well-defined outside the wavefront set of $ \tilde{u}^{(N)} $, which means that the rotational action will not arise new singularities. We examine both $ WF(\tilde{u}^{(N)}) $ and $ WF(u^{(N)}) $. \\
\\
According to the application of Theorem 3.3, we first should check that the principal symbol
\begin{equation*}
p^{(N)} = \frac{2t}{N} \tau^{2} - \lvert \xi \rvert^{2} - \lvert \eta \rvert^{2},
\end{equation*}
\noindent of our hyperbolic operator $ P^{(N)} $ is of real type and properly supported. $ P^{(N)} $ is definitely properly supported, since $ \text{supp}(P^{(N)} u) \subset \text{supp}(u) $ for all $ u $ acting on $ [t_{0}, T] \times \mathbb{R}_{x}^{n} \times \mathbb{R}_{y}^{N} $. This follows from two facts, which could be found in (cf. Melrose, 1990) that on smooth manifold $ M $, the distribution $ u \in \mathcal{C}^{-\infty}(M) $ satisfies
\begin{equation*}
\text{supp}(D^{\alpha} u) \subset \text{supp}(u), \text{supp}(\phi u) \subset \text{supp u}, \forall \phi \in \mathcal{C}^{\infty}(M).
\end{equation*}
\noindent Here $ D^{\alpha} = (-i)^{\lvert \alpha \rvert} \partial^{\alpha} $ is the differential operator, and $ \phi u $ can be considered as the group action $ \mathcal{C}^{\infty}(M) \times \mathcal{C}^{-\infty}(M) \rightarrow \mathcal{C}^{-\infty}(M) $. \\
\\
Then by definition of properly supported pseudodifferential operators, we observe that $ \text{supp}(P^{(N)} u) \subset K $ if $ \text{supp}(u) \subset K $ for any compact set $ K $ provided that $ u \in \mathcal{C}^{-\infty}([t_{0}, T] \times \mathbb{R}_{x}^{n} \times \mathbb{R}_{y}^{N}) $. \\
\\
To verify that $ P^{(N)} $ is of real principal type, we need to check that $ p^{(N)} $ is real and $ dp^{(N)} \neq 0 $ away from the point $ \lvert \xi \rvert = \lvert \eta \rvert = \tau = 0 $. It is clear from the expression $ p^{(N)} = \frac{2t}{N} \tau^{2} - \lvert \xi \rvert^{2} - \lvert \eta \rvert^{2} $ that $ p^{(N)} $ is real, and we can compute that
\begin{equation*}
dp^{(N)} = \frac{2}{N} \cdot \tau^{2} dt + \frac{2t}{N} \cdot 2 \tau d\tau - \sum_{i=1}^{n} 2\xi_{i} d\xi_{i} - \sum_{j=1}^{N} 2\eta_{j} d\eta_{j}.
\end{equation*}
\noindent Clearly $ dp^{(N)} \neq 0 $ when it is away from the point $ \lvert \xi \rvert = \lvert \eta \rvert = \tau = 0 $. It follows that $ P^{(N)} $ has real principal symbol. \\
\\
Combined with both verifications, we can apply Theorem 4.3 to $ \tilde{u}^{(N)} $. It then follows that
\begin{equation*}
WF(\tilde{u}^{(N)}) \subset WF(P^{(N)} \tilde{u}^{(N)} ) \cup \Sigma(P^{(N)}) = \Sigma(P^{(N)}),
\end{equation*}
\noindent since $ \tilde{u}^{(N)} $ is a solution of $ P^{(N)} \tilde{u}^{(N)} = 0 $ hence it only depends on the characteristic set of $ P^{(N)} $ and hence is solely determined by the principal symbol $ p^{(N)} $, the union of maximally null bicharacteristics. \\
\\
We would like to show that
\begin{equation}
WF(u^{(N)}) = WF(\tilde{u}^{(N)}) \subset \Sigma(P^{(N)}).
\end{equation}
\noindent To show this, we need to compute the characteristic set of $ \Sigma(P^{(N)}) $, since the propagation of singularities is alongside the integral curves of Hamiltonian vector field of $ p^{(N)} $ within the characteristic. We would like to show that the integral curves are radial also and thus we can conclude (34). \\
\\
Given the principal symbol $ p^{(N)} = \frac{2t}{N} \tau^{2} - \lvert \xi \rvert^{2} - \lvert \eta \rvert^{2} $ for fixed $ N $, it's associated Hamiltonian vector field $ H_{p^{(N)}} $ is given by
\begin{equation*}
H_{p^{(N)}} = \frac{2t}{N} \cdot 2 \tau \frac{\partial}{\partial t} - \sum_{i=1}^{n} 2\xi_{i} \frac{\partial}{\partial x_{i}} - \sum_{j=1}^{N} 2\eta_{j} \frac{\partial}{\partial y_{j}} - \frac{2\tau^{2}}{N} \frac{\partial}{\partial \tau}.
\end{equation*}
\noindent Within $ \Sigma(P^{(N)}) = (p^{(N)})^{-1}(0) = \lbrace \frac{2t}{N} \tau^{2} - \lvert \xi \rvert^{2} - \lvert \eta \rvert^{2} = 0 \rbrace $, we can compute the integral curves $ (t(s), x(s), y(s), \tau(s), \xi(s), \eta(s)) $ with respect to $ H_{p^{(N)}} $:
\begin{align*}
& \dot{t} = \frac{2t}{N} \cdot 2 \tau, \dot{x}_{i} = -2\xi_{i}, \dot{y}_{j} = - 2\eta_{j}, \\
& \dot{\tau} = -\frac{2\tau^{2}}{N}, \dot{\xi}_{i} = 0, \dot{\eta}_{j} = 0.
\end{align*}
\noindent Solving this linear ODE system with initial conditions $ (t(0), x(0), y(0), \tau(0), \xi(0), \eta(0)) $, we have
\begin{align}\label{twentyone}
& \tau(s) = \frac{\tau(0)N}{2s\tau(0) + N}, \xi_{i}(s) = \xi_{i}(0), \eta_{j}(s) = \eta_{j}(0), \nonumber \\
& x_{i}(s) = x_{i}(0) - 2\xi_{i}(0) \cdot s, y_{j}(s) = y_{j}(0) - 2\eta_{j}(0) \cdot s, \nonumber \\
& t(s) = 4\tau(0)^{2} t(0) (\frac{s}{N} + \frac{1}{2\tau(0)})^{2},
\end{align}
\noindent provided that $ \tau(0) \neq 0 $. When $ \tau(0) = 0 $, we then have the trivial solution
\begin{align}\label{twentytwo}
& t(s) = t(0), x_{i}(s) = x_{i}(0) - 2\xi_{i}(0)s, y_{j}(s) = y_{j}(0) - 2\eta_{j}(0) s, \nonumber \\
& \tau(s) = \tau(0) = 0, \xi_{i}(s) = \xi_{i}(0), \eta_{j}(s) = \eta_{j}(0).
\end{align}
\noindent From (\ref{twentyone}), we can obtain an equation corresponds to $ t(s), x(s), y(s) $, which is
\begin{equation}\label{twentythree}
t(s) = 4\tau(0)^{2} t(0) (\sqrt{\frac{\lvert x(s) - x(0) \rvert^{2} + \lvert y(s) - y(0) \rvert^{2}}{8\tau(0)^{2} t(0) N}} + \frac{1}{2 \tau(0)})^{2}.
\end{equation}
\noindent (\ref{twentythree}) holds since we are considering the null characteristic, i.e. we have $ p^{(N)} = 0 $ in this case. \\
\\
For the integral curves (\ref{twentyone}), (\ref{twentytwo}) within $ \Sigma(P^{(N)}) $, the extra equation it should obey is $ \frac{2t}{N} \tau^{2} - \lvert \xi \rvert^{2} - \lvert \eta \rvert^{2} = 0 $, and it definitely holds when $ s = 0 $, we can see that
\begin{equation*}
\frac{2t(0)}{2N} \tau(0)^{2} - \lvert \xi(0) \rvert^{2} - \lvert \eta(0) \rvert^{2} = 0 \Rightarrow \frac{2t(0)}{N} (\tau(0))^{2} - \lvert \xi(0) \rvert^{2} - \lvert A\eta(0) \rvert^{2} = 0, \forall A \in SO(N).
\end{equation*}
\noindent It follows that if $ \eta(0) \in WF(\tilde{u}^{(N)}) $ then so is $ A\eta(0) $. From (\ref{twentyone}), (\ref{twentytwo}) and (\ref{twentythree}), we observe that $ Ay(s), A\eta(s) \in WF(\tilde{u}^{(N)}) $ if and only if $ Ay(0), A\eta(0) \in WF(\tilde{u}^{(N)}) $. Therefore, it follows that $ WF(\tilde{u}^{(N)}) $ contains all rotation alongside $ y $ - direction and it follows from definition (\ref{nineteen}) that (\ref{twenty}) holds. Furthermore, the propagation of singularities of $ u^{(N)} $ which solves (\ref{seventeen}) are along integral curves (\ref{twentyone}) or (\ref{twentytwo}). \\
\\
Specifically, this indicates that the definition (\ref{nineteen}) is well-defined and the average over all rotational solutions of (\ref{seventeen}) did not expand the wavefront set, hence we conclude (34). \\
\\
We now have a solution of (\ref{seventeen}) radial in $ y $ - coordinates, i.e., $ u^{(N)}(t, x, y) = u^{(N)}(t, x, \lvert y \rvert) $. Alongside the hyperplane $ t = \frac{\lvert y \rvert^{2}}{2N} $, we conclude that $ u^{(N)} $ is constant alongside each $ \mathbb{S}^{N} $ fiber whose curvature is $ \frac{1}{2N} $. We would like to show that outside $ WF(u^{(N)}) $, the projection of $ u^{(N)} $ onto $ (t, x) $ - plane solves (\ref{eighteen}). \\
\\
Since $ u^{(N)}(t, x, y) = u^{(N)}(t. x, \sqrt{2Nt}) $ is parametrized by $ t, x $ on $ \lbrace t = \frac{\lvert y \rvert^{2}}{2N} \rbrace $, we have
\begin{align*}
0 & = (\frac{2t}{N} \cdot \frac{\partial^{2}}{\partial t^{2}} - \Delta_{x} ) u^{(N)}(t, x, \lvert y \rvert) - \sum_{j=1}^{N} \frac{\partial^{2}u^{(N)}(t, x, \lvert y \rvert)}{\partial y_{j}^{2}} \\
& = (\frac{\sum_{i=1}^{N} y_{i}^{2}}{N^{2}} \cdot \frac{\partial^{2}}{\partial t^{2}} - \Delta_{x} ) u^{(N)}(t, x, \sqrt{2Nt}) - \sum_{j=1}^{N} (\frac{1}{N} \frac{\partial }{\partial t} + \frac{y_{j}^{2}}{N^{2}} \frac{\partial^{2} }{\partial t^{2}})u^{(N)}(t, x, \sqrt{2Nt}) \\
& = - \Delta_{x} u^{(N)}(t, x, \sqrt{2Nt}) - \frac{\partial u^{(N)}(t, x, \sqrt{2Nt})}{\partial t}.
\end{align*}
\noindent It follows that $ u^{(N)}|_{\lbrace t = \lvert y \rvert^{2} \slash 2N \rbrace} $ solves (\ref{eighteen}) outside $ WF(u^{(N)}) $ for each $ N \in \mathbb{N} $. \\
\\
If a point $ (t, x, y, \tau, \xi, \eta) \notin WF(u^{(N)}) $ for all large $ N $, then we conclude that at this point, $ u(t, x) = \lim_{N \rightarrow \infty} u^{(N)}(t, x, y) $ does exist in the strong sense and solves the backward heat equation by above argument. It is the same as saying that $ (t, x, y, \tau, \xi, \eta) \notin WF_{\infty} $. \\
\\
To show that the limit does exist at all points, we would like to show that $ WF_{\infty} : = \lim_{N \rightarrow \infty} WF(u^{(N)}) $ defined in Definition 4.1, contains only the initial points, i.e. collection of points $ (t_{0}, x, y, \tau(0), \xi(0), \eta(0)) $ within the null bicharacteristics, as well as $ (t, \infty, \infty, \tau(0), \xi(0), \eta(0)) $, whose projection onto the $ \mathbb{R}_{x}^{n} $ is at infinity, the extreme faraway.
\begin{remark} We fix $ N $ in above discussions including the derivation of null bicharacteristics, the existence of limit, and solvability of (\ref{seventeen}) and (\ref{eighteen}) so we use the somehow vague notation $ (t, x, y, \tau, \xi, \eta) $. For discussion below, we consider the collection of all $ u^{(N)} $ and their corresponding wavefront sets, hence we apply same notation in Definition 4.1 to denote $ (t^{(N)}, x^{(N)}, y^{(N)}, \tau^{(N)}, \xi^{(N)}, \eta^{(N)}) $ as points in $ T^{*}([t_{0}, T] \times \mathbb{R}_{x}^{n} \times \mathbb{R}_{y}^{N}) $.
\end{remark}
We begin with the case $ \tau^{(N)}(0) = 0 $ for each $ N $, in this case we apply (\ref{twentytwo}) and the fact that $ p^{(N)}(t^{(N)}, x^{(N)}, y^{(N)}, \tau^{(N)}, \xi^{(N)}, \eta^{(N)}) = 0 $, it follows that
\begin{align*}
& \frac{2t^{(N)}(0)}{N} \tau^{(N)}(0)^{2} - \lvert \xi^{(N)}(0) \rvert^{2} - \lvert \eta^{(N)}(0) \rvert^{2} = 0 \\
\Rightarrow & \lvert \xi^{(N)}(0) \rvert^{2} + \lvert \eta^{(N)}(0) \rvert^{2} = 0 \Rightarrow \xi^{(N)} = \eta^{(N)} = 0, \forall N \\
\Rightarrow & x^{(N)}(s) = x^{(N)}(0), y^{(N)}(s) = y^{(N)}(0), \xi^{(N)}(s) = \eta^{(N)}(s) = 0. \; \text{Using (\ref{twentytwo})}.
\end{align*}
\noindent Apply the distance function $ \lvert \cdot \rvert_{WF_{\infty}} $, we conclude that
\begin{equation*}
(t(0), x(0), y(0), 0, 0, 0) \in WF_{\infty}.
\end{equation*}
\noindent where $ t(0) = t^{(N)}(0) = T, x(0) = x^{(N)}(0), \forall N $ and $ y(0) = (y_{1}^{(N)}(0), \dotso, y_{N}^{(N)}(0), 0, 0, \dotso) $. Note that $ (t(0), x(0), y(0)) $ indicates a collection of points, since we can choose different initial points for null bicharacteristics. It follows that all initial points of the backward heat equation are contained in $ WF_{\infty} $. \\
\\
The other case corresponds to $ \tau^{(N)}(0) \neq 0 $. \\
Apply the fact that $ p^{(N)}(t^{(N)}, x^{(N)}, y^{(N)}, \tau^{(N)}, \xi^{(N)}, \eta^{(N)}) = 0 $, we see that for given $ \epsilon > 0 $, when $ N $ large enough, we must have
\begin{equation*}
\lvert \xi^{(N)}(0) \rvert^{2} + \lvert \eta^{(N)}(0) \rvert^{2} < \epsilon \Rightarrow \lvert \xi^{(N)}(s) \rvert < \epsilon, \lvert \xi^{(N)}(s) \rvert < \epsilon.
\end{equation*}
\noindent when $ N $ is large enough. This implies that at $ s = 0 $, then when $ N $ large enough, we have
\begin{equation}\label{twentyfour}
\lvert (t(0), x(0), y(0), \tau(0), 0, 0) - (t^{(N)}, x^{(N)}, y^{(N)}, \tau^{(N)}, x^{(N)}, \eta^{(N)}) \rvert_{WF_{\infty}} < \epsilon.
\end{equation}
\noindent Here $ t(0), x(0), y(0) $ have the same interpretation as above and $ \tau(0) = \tau^{(N)}(0) $. \\
According to (\ref{twentyone}), we can obtain $ t^{(N)}(s) (\tau^{(N)}(s))^{2} = t^{(N)}(0) (\tau^{(N)}(0))^{2} $ and since $ \tau^{(N)}(0) \neq 0 $, it follows that $ t(s) \neq t(0) $ when $ s \neq 0 $, then from (\ref{twentythree}), we observe that when we arrive at some $ t \in [t_{0}, T] $ after some time $ s $, the integral curves travel to points $ (x^{(N)}(s), y^{(N)}(s)) $ in a way that
\begin{equation*}
\lvert x^{(N)}(s) - x^{(N)}(0) \rvert^{2} = O(N), \lvert y^{(N)}(s) - y^{(N)}(0) \rvert^{2} = O(N).
\end{equation*}
\noindent It follows that when $ N $ increases, the singularities we achieve are far and far away and eventually we conclude by Definition 4.1 that 
\begin{equation}\label{twentyfive}
(t(s), \infty, \infty, \tau(s), \xi(s), \eta(s)) \in WF_{\infty}.
\end{equation}
\noindent It indicates that essentially, $ (t, \infty) \in [t_{0}, T] \times \mathbb{R}_{x}^{n} $ amounts to the only singularity of (\ref{seventeen}) as well as (\ref{eighteen}) in the limiting sense. It follows that $ u(t, x) = \lim_{N} u^{(N)}(t, x, y) |_{\lbrace t = \frac{\lvert y \rvert^{2}}{2N} \rbrace} $ does exist at all points $ [t_{0}, T) \times \mathbb{R}_{x}^{n} $ and solves (\ref{eighteen}). \\
\\
The only left points $ (t(0), x(0), y(0), \tau(0), 0, 0) $ in $ WF_{\infty} $ shows that we may not have an explicit expression for either $ u(t, x) $ or $ u^{(N)}(t, x, y) $ at the initial point $ t = T $, but the initial conditions $ H, h, \tilde{H} $ are satisfied by limiting argument. For example, we can easily deduce the fact that $ u(t, x) \rightarrow H(x), t \rightarrow t_{0} $. \\
\\
Lastly, all initial conditions are satisfied due to the construction of $ H, \tilde{H} $ and the uniqueness of the solution of (\ref{eighteen}).
\end{proof}
\begin{remark} The equation (\ref{seventeen}) can be rewrite formally as
\begin{equation*}
\frac{\partial^{2} u^{(N)}}{\partial t^{2}} = \frac{N}{2t} \Delta_{x, y} u^{(N)}.
\end{equation*}
\noindent The speed of propagation for each $ u^{(N)} $ is now $ \sqrt{N \slash 2t} $, which increases as $ N $ increases, and heuristically, we observe that the property of infinite speed of propagation, which describes by heat equation, could be approximated by increasingly finite speed of propagation. \\
\\
On the other hand, if we agree that the light speed $ c $ is the upper limit, then the above equation indicates that although we may have nearly infinite speed, we should wait a while to observe the behavior of $ u $ or $ u^{(N)} $ when we are in the space with large dimension.
\end{remark}
\begin{remark} (Suspicious remark) (\ref{twentyfour}) says that at the initial points, the singularities will only move alongside the $ \tau $ - direction or equivalently, $ \partial_{t} u $ may not exist, which corresponds to the heat kernel. (\ref{twentyfive}) says that at infinity, the singularities could arrive alongside each direction, it follows that when $ t = T $, then the solution may loss regularities alongside all coordinates.
\end{remark}
For the backward heat equation (\ref{eighteen}), if we assume the initial condition $ u(T, x) = h(x) \in \mathcal{C}_{0}^{\infty}(\mathbb{R}) $ at arbitrarily large $ T $, we can easily extend the above theorem to the interval $ [t_{0}, T] $ for arbitrarily large $ T $. For example, in the Ricci flow case, $ \partial_{t} u = - R $, the scalar curvature. Thus if the Ricci flow can flow forward for all time then definitely we can consider the corresponding backward equation (17), or gradient steady/shrinking/expanding solitons on the time interval $ [t_{0}, + \infty) $. One particular example for this is the $ \kappa $ - solution in three dimensional manifolds. \\

\noindent Instead of considering the backward heat equation on $ [t_{0}, T] \subset (0, +\infty) $, we can also consider the forward heat equation $ \partial_{t} u - \Delta_{x} u = 0, u : \mathbb{R}_{t}^{-} \times \mathbb{R}_{x}^{n} \rightarrow \mathbb{R} $ on some interval $ [T, t_{0}] \subset (-\infty, 0) $, where $ t_{0} < 0 $. We would like to translate this forward heat equation to some similar wave equation by embedding $ \mathbb{R}_{t}^{-} \times \mathbb{R}_{x}^{n} $ into $ \mathbb{R}_{t}^{-} \times \mathbb{R}_{x}^{n} \times \mathbb{R}_{y}^{N} $ with the Riemannian metric $ dy^{2} + dx^{2} $ again. \\
\\
Analogous to the backward heat equation, we consider the relation between time and $ y $ - variables by using $ t = -\frac{\lvert y \rvert^{2}}{2N} $ in this case. With this change of variables, we observe that
\begin{equation*}
dy^{2} + dx^{2} = dr^{2} + r^{2} d\omega_{1}^{2} + dx^{2} = -\frac{N}{2t} dt^{2} + (-t)d\omega_{1 \slash 2N}^{2} + dx^{2}.
\end{equation*}
\noindent Similarly, we obtain that
\begin{equation*}
\Delta_{y} = -\frac{\partial}{\partial t} - \frac{2t}{N} \cdot \frac{\partial^{2}}{\partial t^{2}}.
\end{equation*}
\noindent If $ u $ solves the forward heat equation $ (\partial_{t} - \Delta_{x})u = 0 $ on $ [T, t_{0}] \subset (-\infty, 0) $ we then obtain that
\begin{equation*}
(- \frac{2t}{N} \cdot \frac{\partial^{2}}{\partial t^{2}} - \Delta_{x} - \Delta_{y}) u = 0.
\end{equation*}
\noindent Note that due to the sign and domain that $ t $ is sitting in, the operator $ P^{(N)} : = - \frac{2t}{N} \cdot \frac{\partial^{2}}{\partial t^{2}} - \Delta_{x} - \Delta_{y} $ is still strictly
positive hyperbolic operator on $ [T, t_{0}] \times \mathbb{R}_{x}^{n} \times \mathbb{R}_{y}^{N} $. For the forward heat equation on negative time axis, we have the following wave equation approximation.
\begin{theorem}
Given the Cauchy problem of variable coefficient wave equation
\begin{equation}\label{one-and-two}
 (-\frac{2t}{N} \cdot \frac{\partial^{2}}{\partial t^{2}} - \Delta_{x, y} ) u = 0, (t, x, y) \in [T, t_{0}] \times \mathbb{R}_{x}^{n} \times \mathbb{R}_{y}^{N}, u(T, x, y) = h(x) -\infty < T < t_{0} < 0. 
\end{equation}
\noindent provided that $ h(x) \in \mathcal{C}_{0}^{\infty}(\mathbb{R}^{n}) $. \\
Then for each $ N $, there exists a radial solution $ u = u^{(N)}(t, x, y) $ of this Cauchy problem with respect to $ y $ - coordinates, i.e. $ u^{(N)}(t, x, y) = u^{(N)}(t, x, Ay), \forall A \in SO(N) $, such that the limit $ u(t, x) : = \lim_{N \rightarrow \infty} u^{(N)}(t, x, y) |_{ \lbrace t = -\lvert y \rvert^{2} \slash 2N \rbrace} $ exists at all points in $ [T, t_{0}] \times \mathbb{R}_{x}^{N} $. \\
Furthermore, the limit $ u(t, x) \in \mathcal{C}^{\infty}([T, t_{0}] \times \mathbb{R}_{x}^{n}) $ solves the forward heat equation
\begin{equation}
\partial_{t} u - \Delta_{x} u = 0, u(T, x) = h(x), (t, x) \in [T, t_{0}] \times \mathbb{R}_{x}^{n}.
\end{equation}
\end{theorem}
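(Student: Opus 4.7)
The plan is to mirror the proof of Theorem 4.1 almost verbatim, modifying only the sign-dependent calculations to reflect the fact that we now work on $t \in [T, t_0] \subset (-\infty, 0)$ and parametrize with $t = -|y|^2/2N$. Concretely, let $\tilde{P}^{(N)} := -\frac{2t}{N}\partial_t^2 - \Delta_{x,y}$. Since $t \leqslant t_0 < 0$, the coefficient $-2t/N$ is strictly positive and bounded below on any compact subinterval, so after division the operator $D_t^2 - \frac{N}{-2t}\Delta_{x,y}$ fits Definition 4.1: top-order coefficient is $1$, the symbol of the spatial Laplacian is polyhomogeneous of the right degree, and the two roots $\tau = \pm\sqrt{\frac{-2t}{N}^{-1}(|\xi|^2+|\eta|^2)}$ of the principal symbol $p^{(N)} = \tau^2 - \frac{N}{-2t}(|\xi|^2+|\eta|^2)$ are real and uniformly simple. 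Theorem 4.2 then yields a weak solution of the Cauchy problem (\ref{one-and-two}), and Theorem 4.3 applied to the properly supported real-principal-type operator $P^{(N)}$ gives smoothness outside $WF(u^{(N)})$, with $WF(u^{(N)}) \subset \Sigma(P^{(N)})$.

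Next I would produce a radial-in-$y$ solution by the same averaging trick: given a weak solution $\tilde u^{(N)}$, the Euclidean Laplacian $\Delta_y$ is $SO(N)$-invariant by Proposition 4.1, so $\tilde u^{(N)}(t,x,Ay)$ is also a solution for any $A \in SO(N)$, and the Haar average
\[
u^{(N)}(t,x,y) := \frac{1}{\text{Vol}(SO(N))} \int_{SO(N)} \tilde u^{(N)}(t,x,Ay)\, d\mu_N
\]
is a radial solution by dominated convergence. Exactly as in Theorem 4.1, because the principal symbol is already invariant under the $SO(N)$ action on $(y,\eta)$ and null bicharacteristics respect this symmetry, averaging does not enlarge the wavefront set, so $WF(u^{(N)}) \subset \Sigma(P^{(N)})$.

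The verification that $u^{(N)}$ restricted to the hypersurface $\{t = -|y|^2/2N\}$ solves the forward heat equation is then a direct computation: from $t = -|y|^2/2N$ one gets $\partial_{y_j} = -\frac{y_j}{N}\partial_t$ and hence
\[
\Delta_y = -\partial_t - \frac{2t}{N}\partial_t^2,
\]
so the relation $\tilde P^{(N)} u^{(N)} = 0$ collapses on the hypersurface to $\partial_t u - \Delta_x u = 0$ in $(t,x)$ coordinates, outside $WF(u^{(N)})$.

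The core and only nontrivial part is the wavefront-set analysis in the limit $N \to \infty$, which I would handle by integrating the Hamilton vector field of $p^{(N)}$ on $\Sigma(P^{(N)})$. Here $H_{p^{(N)}}$ gives
\[
\dot t = -\frac{4t\tau}{N},\quad \dot x_i = -2\xi_i,\quad \dot y_j = -2\eta_j,\quad \dot\tau = \frac{2\tau^2}{N},\quad \dot\xi_i = \dot\eta_j = 0,
\]
which I can solve explicitly (the $\tau$ equation gives $1/\tau(s) = 1/\tau(0) - 2s/N$, and then $t(s)$ is determined by $t(s)\tau(s)^2 = t(0)\tau(0)^2$). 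The characteristic constraint $-\frac{2t(0)}{N}\tau(0)^2 = |\xi(0)|^2 + |\eta(0)|^2$ forces $|\xi(0)|^2+|\eta(0)|^2 = O(1/N)$, so as $N \to \infty$ one of two things happens along the null bicharacteristic: either $\tau(0) = 0$ and $\xi(0) = \eta(0) = 0$ (so the bicharacteristic is stationary and the only $WF_\infty$ point is the initial datum $(T,x,y,0,0,0)$), or $\tau(0) \neq 0$ and, by the same $O(N^{1/2})$ displacement estimates as in Theorem 4.1, the spatial components escape to infinity. The expected main obstacle is precisely this escape-to-infinity step: I have to verify carefully that the sign flip in the Hamilton system does not allow a bicharacteristic starting in $\{t \geqslant T\} \cap \{t \leqslant t_0 < 0\}$ to return and produce a new finite $WF_\infty$ point, and this is exactly where the strict negativity of $t$ (bounding $|\tau(s)|$ uniformly) is used. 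Once this is done, Definition 4.2 identifies $\mathcal{P}$ as the initial slice $\{t=T\}$ together with the points at spatial infinity, the limit $u(t,x)$ exists pointwise on $[T,t_0] \times \mathbb{R}^n_x$, solves the forward heat equation there by the on-hypersurface computation above, and matches the initial data by the uniqueness of the forward heat equation.
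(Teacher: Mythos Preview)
Your proposal is correct and follows exactly the paper's approach: the paper's own proof of this theorem consists of a single sentence saying it is essentially the same as Theorem 4.1, noting only that Theorem 4.2 still applies on $[-T,0]$ by time-reversibility of the wave equation. You have simply written out that ``same as Theorem 4.1'' argument in full with the appropriate sign changes, which is precisely what is intended.
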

\begin{proof}
The proof is essentially the same as the proof of Theorem 4.1. The only difference we would like to point out here is that Theorem 4.2, which corresponds to the existence of the wave equation also holds when we replace the time interval $ [0, T] $ to $ [-T, 0] $, since the solution of wave equation is time-reversible.
\end{proof}
\begin{remark} In the Euclidean case, we introduce the notion of $ WF_{\infty} $, discuss explicitly the propagation of singularities for wave equation and show that $ WF_{\infty} $ contains exactly the points for which the heat kernel is not smooth. We now generalize the discussion in Euclidean case to Riemannian manifold case with Ricci flow to show that (\ref{heat}) can be approximated by (\ref{one}) strongly outside the wavefront set. We will also show how the wavefront set in Riemannian manifold case looks like.
\end{remark}

\noindent \section{The Analysis of Wave Approximation of Backward Heat Equation on Ricci Flow: Part II}
In this section, we would like to mimic the procedure in the Euclidean case to show the relation between the variable coefficient wave equation on open manifold $ M_{x} \times \mathbb{R}_{y}^{N} $ and the heat equation on $ M $, on some positive time interval $ [t_{0}, T] \subset [0, \infty) $. \\
\\
The difference here is that on manifold situation, the variable coefficient wave equation only admits a semi-global solution, as H\"ormander discussed in \cite{Hor3}. The notion of strictly hyperbolic operator on functions with manifold domain are also slightly different, which is a generalization of Euclidean case.

\begin{definition} {\bf \cite{Hor3}, Definition 23.2.3.} A differential operator $ P $ of order $ m $ in the $ \mathcal{C}^{\infty} $ manifold $ X $, with principal symbol $ p $, is said to be the strictly hyperbolic with respect to the level surfaces of $ \phi \in \mathcal{C}^{\infty}(X, \mathbb{R}) $ if $ p(x, \phi^{'}(x)) \neq 0 $ and the characteristic equation $ p(x, \xi + \tau \phi^{'}(x)) = 0 $ has $ m $ different real roots $ \tau $ for every $ x \in X $ and $ \xi \in T_{x}^{*} \backslash \mathbb{R} \phi^{'}(x) $.
\end{definition}
\begin{remark} H\"ormander stated in Section 23.4 of \cite{Hor3} that locally if $ \phi(x) = x_{n} $ and $ X $ is a neighborhood of $ \mathbb{R}^{n} $, the condition of the operator $ P $ of order $ m $ with principal symbol $ p $ is said to be strictly hyperbolic with respect to the level surfaces of $ \phi $ is equivalent to the following conditions: \\
(i) $ p(x, \xi) = \sum_{0}^{m} p_{j}(x, \xi^{'}) \xi_{n}^{j}, p_{m} = 1 $, where $ \xi^{'} \in \mathbb{R}^{n-1} $; \\
(ii) when $ x \in X $, $ x_{n} > 0 $, and $ \xi^{'} \in \mathbb{R}^{n-1} \backslash 0 $, the equation $ p(x, \xi) $ has only simple real roots $ \xi_{n} $.
\end{remark}
\noindent With this defition, H\"ormander pointed out a version of semi-global solution with respect to $ Pu = f $, as the following theorem shows.

\begin{theorem} {\bf \cite{Hor3}, Theorem 23.2.4.}
Let $ P $ be a differential operator of order $ m $ with $ \mathcal{C}^{\infty} $ coefficients in the $ \mathcal{C}^{\infty} $ manifold $ X $, and let $ Y \Subset X $ be an open subset. Assume that $ P $ is strictly hyperbolic with respect to the level surfaces of $ \phi \in \mathcal{C}^{\infty}(X, \mathbb{R}) $ and set
\begin{equation*}
X_{+} = \lbrace x \in X : \phi(x) > 0 \rbrace, X_{0} = \lbrace x \in X : \phi(x) = 0 \rbrace.
\end{equation*}
\noindent If $ f \in H_{loc}^{s}(X) $ has support in the closure of $ X_{+} $ one can then find $ u \in H_{loc}^{s + m - 1} (X) $ with support in the closure of $ X_{+} $ such that $ Pu = f $ in $ Y $. If $ s \geqslant 0 $, $ v $ is a vector field with $ v\phi = 1 $, and $ f \in \bar{H}_{loc}^{s}(X_{+}) $, $ \phi_{j} \in H_{loc}^{s + m - 1 - j}(X_{0}), j < m $, then there is some $ u \in H_{loc}^{s + m - 1}(X_{+}) $ such that $ Pu = f $ in $ X_{+} \cap Y $ and $ v^{j}u = \phi_{j} $ in $ X_{0} \cap Y $ when $ j < m $.
\end{theorem}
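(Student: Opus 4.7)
The plan is to prove Theorem 5.1 — Hörmander's semi-global existence result for strictly hyperbolic operators on a manifold — by reducing it to the Euclidean prototype Theorem 4.2 via local straightening of the time function $\phi$, and then patching local Cauchy solutions together using finite propagation speed and energy estimates. The strict hyperbolicity hypothesis of Definition 5.1 guarantees that in any chart where $\phi$ is taken as one of the coordinates, the normalized operator fits the hypotheses of Definition 4.1 after dividing by the coefficient of the top $\phi$-derivative (nonvanishing because $p(x, \phi'(x)) \neq 0$).

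First I would establish the fundamental hyperbolic energy inequality. For a slab $S_{a,b} = \{a < \phi < b\} \cap Y$, I claim that any $u \in H^{s+m-1}_{loc}$ supported in $\overline{X_+}$ satisfies
\begin{equation*}
\|u\|_{H^{s+m-1}(S_{a,b})} \leq C\Bigl(\|Pu\|_{H^{s}(S_{a,b})} + \sum_{j < m} \|v^j u|_{\phi = a}\|_{H^{s+m-1-j}}\Bigr).
\end{equation*}
This is obtained by commuting $P$ with a tangential $\Lambda^s$, pairing the result with a suitable weighted multiple of $v^{m-1}u$, and using the simple-real-root factorization of the principal symbol to control the lower order commutator errors. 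Integration by parts in the direction $v$, which satisfies $v\phi = 1$, converts the bulk terms into boundary integrals on the level sets $\{\phi = a\}$ and $\{\phi = b\}$, producing the claimed estimate once Gårding-type inequalities absorb the remainder terms.

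Second, I would use this estimate dually to extract existence. Since $P^{\ast}$ is also strictly hyperbolic with respect to $\phi$ but propagates in the opposite direction, the symmetric inequality holds for $P^{\ast}$ on test functions supported on the other side of $X_0$. The Hahn–Banach theorem applied to the functional $P^{\ast}\varphi \mapsto \langle f, \varphi\rangle$ then yields a distribution $u$ supported in $\overline{X_+}$ solving $Pu = f$, and the primal energy estimate of Step 1 upgrades its regularity from distributional to $H^{s+m-1}_{loc}$.

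Third, to pass from local slabs to the semi-global assertion on $Y \Subset X$, cover $\overline{Y}$ by finitely many coordinate charts $(U_k, \kappa_k)$ on which $\phi \circ \kappa_k^{-1}(x) = x_n$. In each chart the pushed-forward operator satisfies the hypotheses of Definition 4.1, so Theorem 4.2 supplies a Cauchy solution on a thin slab above $\phi = 0$. Finite propagation speed — an immediate consequence of the reality and simplicity of the characteristic roots, which bounds bicharacteristic velocity — shows that the domain of influence of data in $X_0 \cap U_k$ stays compactly inside $U_k$ over a short $\phi$-increment. One then concatenates slabs: the trace $v^j u|_{\phi = a_k}$ of the solution on slab $k$ becomes the Cauchy datum for slab $k+1$, and the energy-estimate uniqueness of Step 1 makes this gluing unambiguous. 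For the second assertion of the theorem (inhomogeneous Cauchy data on $X_0$), extend each $\phi_j \in H^{s+m-1-j}_{loc}(X_0)$ to $\widetilde{\phi}_j$ on a neighborhood of $X_0$ with the same local Sobolev regularity, form the Taylor-lift $w = \sum_{j < m} \widetilde{\phi}_j \, \phi^{j}/j!$ which satisfies $v^j w|_{X_0} = \phi_j$, and write $u = w + u'$ with $u'$ solving the homogeneous-Cauchy problem $Pu' = f - Pw$ already handled by the first part.

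The main obstacle will be Step 1, namely producing the energy inequality with the correct gain of $m-1$ derivatives. This requires a careful pseudodifferential factorization of the principal symbol as $p(x, \xi + \tau \phi'(x)) = \prod_{k=1}^{m}(\tau - \lambda_k(x, \xi))$, reducing the scalar $m$-th order problem to an equivalent first-order $m \times m$ system whose symmetrizer can be constructed from the distinct roots. The resulting energy identity is clean for the symmetrized system, but tracking the lower order commutator errors back through the reduction — and then verifying that all constants survive a partition of unity when passing to the invariant manifold statement — is the technically involved part.
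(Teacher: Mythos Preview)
The paper does not supply a proof of this statement: Theorem 5.1 is quoted verbatim from H\"ormander's treatise (Theorem 23.2.4 of \cite{Hor3}) and is used as a black-box tool to obtain semi-global existence for the manifold wave equation in the proof of Theorem 3.1. There is therefore no ``paper's own proof'' to compare against.

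That said, your sketch is a faithful outline of the standard proof one finds in H\"ormander: establish the energy inequality (via reduction to a first-order symmetrizable system using the simple-real-root factorization), obtain existence from the dual estimate for $P^\ast$ by Hahn--Banach, and then pass from local slabs to $Y\Subset X$ by finite propagation speed and slab concatenation, handling the inhomogeneous Cauchy data by a Taylor lift. The only caution is that your Step~3 overstates the reliance on Theorem~4.2 (which as formulated in the paper lives globally on $\mathbb{R}\times\mathbb{R}^n$): in the manifold setting one does not literally invoke that theorem in each chart but rather reproduces its energy estimate locally and uses it to glue, exactly as you describe in Step~1. Apart from this minor misattribution, the proposal is correct in spirit and matches the argument in the cited reference.
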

\noindent The semi-global solution of strictly hyperbolic operator restricts our discussion on some open, precompact submanifold of the space $ [t_{0}, T] \times M_{x} \times \mathbb{R}_{y}^{N} $. To do this, we choose an open, precompact submanifold $ \tilde{M}_{x} \times B_{y}^{N} $ of $ M_{x} \times \mathbb{R}_{y}^{N} $ and consider the operator $ \frac{2t}{N} \partial_{t}^{2} - \Delta_{x, g(t)} - \Delta_{y} $ on $ [t_{0}, T] \times \tilde{M}_{x} \times B_{y}^{N} $. We also require the boundary of the closure of $ M_{x} $, i.e. $ \partial \bar{\tilde{M}}_{x} $ to be at least $ \mathcal{C}^{1} $. \\
\\
We now prove Theorem 3.1, which is the wave equation approximation of the backward heat equation on Riemannian manifold. To show this, we check what is the real expression in (\ref{one}) by analyzing the Laplacian $ \tilde{\Delta}_{\tilde{g}^{(N)}(t)} $ on $ \tilde{M}^{(N)} : = M_{x} \times \mathbb{R}_{y}^{N} $. Here $ (M, g(t)) $ is a noncompact manifold without boundary and
\begin{equation*}
\tilde{g}^{(N)} = g(t) \oplus dy^{2}, y = (y_{1}, \dotso, y_{N}),
\end{equation*}
\noindent with the corresponding volume form $ d\mu(t) \cdot dy $. We compute $ \tilde{\Delta}_{\tilde{g}^{(N)}(t)} $ in (\ref{thirteen}) by Dirichlet form:
\begin{align*}
\int_{\tilde{M}^{(N)}} u  \tilde{\Delta}_{\tilde{g}^{(N)}(t)} vd\mu(t) dy & = -\int_{\tilde{M}^{(N)}} \tilde{g}^{(N)}(\nabla^{(N)}u, \nabla^{(N)} v) d\mu(t) dy \\
& = -\int_{\mathbb{R}_{y}^{N}} \int_{M_{x}} \nabla_{y} u \cdot \nabla_{y} v + g(t)(\nabla_{x, g(t)}u, \nabla_{x, g(t)}v) d\mu(t) dy \\
& = \int_{\tilde{M}^{(N)}} u (\Delta_{y} v +  \Delta_{x, g(t)} v - \frac{r}{2N} \text{tr}(\dot{g}) \partial_{t} v) d\mu(t) dy,
\end{align*}
\noindent by integration by parts. Under the existence of the Ricci flow at desired time, the above formula indicates that $  \tilde{\Delta}_{\tilde{g}^{(N)}(t)} $ is of the form (\ref{thirteen}). With (\ref{thirteen}), we observe that (\ref{fourteen}) is exactly the wave equation of the form
\begin{equation}\label{twentysix}
\frac{2t}{N} \frac{\partial^{2}u}{\partial t^{2}} - \Delta_{x, g(t)}u - \Delta_{y} u = 0.
\end{equation}
\noindent According to (\ref{twentysix}), we can start at the proof of Theorem 3.1.
\begin{proof} {\bf Proof of Theorem 3.1.} 
First of all, we should show that the wave equation (\ref{one}) has a solution on $ Y $. According to the discussion above, (\ref{one}) is equivalent as (\ref{twentysix}), so we show the solvability of (\ref{twentysix}) on $ Y $. \\
\\
To show this, we check the strict hyperbolicity in the sense of Definition 5.1, and then apply Theorem 5.1 to conclude the existence of solution on $ Y $. \\
\\
Note that $ Y = (t_{0}, T) \times \tilde{M}_{x, g(t)} \times \mathbb{B}_{y}^{N} $, to check the strict hyperbolicity of $ P $ with respect to $ \phi $, which is the defining function of $ \partial (\bar{Y}) $, we check the strict hyperbolicity near the boundary of $ \bar{Y} $ by Remark 5.1. It suffices to check strict hyperbolicity with the defining function of the form $ \phi(t, x, y) = t - t_{0} $. Since $ [t_{0}, T] $ is a manifold with single chart, the expression of $ \phi $ is uniform with respect to all charts that consists of the covering of $ Y $. \\
\\
As before, we consider the following equivalent PDE:
\begin{equation*}
(\frac{2t}{N} \frac{\partial^{2}}{\partial t^{2}} - \Delta_{x, g(t)} - \Delta_{y}) u = R \Leftrightarrow (D_{t}^{2} + \frac{N}{2t} \Delta_{x, g(t)} + \frac{N}{2t}\Delta_{y}) u = - \frac{N}{2t}R.
\end{equation*}
\noindent The solvability for above two equations are exactly the same since $ t \in [t_{0}, T], t_{0} > 0 $. The corresponding differential operator for the second equation above is defined to be
\begin{equation*}
\tilde{P}^{(N)} = D_{t}^{2} + \frac{N}{2t} \Delta_{x, g(t)} + \frac{N}{2t}\Delta_{y}.
\end{equation*}
\noindent Within any smooth chart of $ Y $, the principal symbol of $ \tilde{P}^{(N)} $, denoted to be $ \tilde{p}^{(N)} $ is given to be
\begin{equation*}
\tilde{p}^{(N)} = \tau^{2} - \frac{N}{2t}(\lvert \xi \rvert_{g(t)}^{2} + \lvert \eta \rvert^{2}).
\end{equation*}
\noindent Here $ \tau \in T_{t}^{*} \mathbb{R} $, $ \xi \in T_{x}^{*} M_{x, g(t)} $ and $ \eta \in T_{y}^{*} \mathbb{R}_{y}^{N} $. Since $ \phi(t, x, y) = t - t_{0} $, we can translate the time axis and consider the function $ \phi = t $ with the principal symbol
\begin{equation*}
\tilde{p}^{(N)} = \tau^{2} - \frac{N}{2(t + t_{0})}(\lvert \xi \rvert_{g(t)}^{2} + \lvert \eta \rvert^{2}), t \in [0, T - t_{0}].
\end{equation*}
\noindent According to Remark 5.2, it is clear that $ p_{2} = 1 $ and $ \tilde{p}^{(N)} = 0 $ has simple real roots. \\
\\
By checking Definition 5.1, we observe that $ \phi^{'}(t, x, y) = (1, 0, \dotso, 0) $ and it follows that
\begin{equation*}
\tilde{p}^{(N)}(t, x, y, \phi^{'}(x))  = 1 \neq 0.
\end{equation*}
\noindent Furthermore, we observe that locally within some single smooth chart, the local form of principal symbol has
\begin{equation*}
\tilde{p}^{(N)}(t, x, y, \tilde{\tau} + \tau, \xi, \eta) = (\tilde{\tau} + \tau)^{2} - \frac{N}{2t} (\lvert \xi \rvert_{g(t)}^{2} + \lvert \eta \rvert^{2}).
\end{equation*}
\noindent And $ \tilde{p}^{(N)}(t, x, y, \tilde{\tau} + \tau, \xi, \eta) = 0 $ has two different real roots $ \tilde{\tau} $ for fixed tuple $ (t, x, y, \tau, \xi, \eta) $. \\
\\
Since the operator $ \tilde{P}^{(N)} $ is strictly hyperbolic with respect to $ \phi = t $, it follows that the initial value problem of (\ref{one}) has at least one solution on $ Y $ in the weak sense. \\
\\
The following procedure is almost the same as in the Euclidean case. First we show that (\ref{twentysix}) has a strong solution outside the wavefront set. Then we construct a radial solution of (\ref{twentysix}) in $ y $ and analyze its wavefront set. We then show that on the hypersurface $ t = \frac{\lvert y \rvert^{2}}{2N} $ the projection of this radial solution onto $ \mathbb{R}_{t} \times M_{x} $ does solve the corresponding heat equation (\ref{heat}), again outside the wavefront set. Lastly we show that the limit $ u(t, x) = \lim_{N \rightarrow \infty} u^{(N)}(t, x, y) $ exists pointwisely outside $ WF_{\infty} $ and solves (\ref{heat}). In addition, the initial condition is satisfied. \\
\\
By Theorem 4.2, we conclude that for each $ N $, the equation (\ref{one}), or equivalently (\ref{twentysix}) has a strong solution $ \tilde{u}^{(N)} $ outside $ WF(\tilde{u}^{(N)}) $, provided that the Ricci flow exists at time $ T $ since on $ [t_{0}, T] \subset (0, \infty) $, the principal symbol of the operator
\begin{equation*}
P^{(N)} = \frac{2t}{N} \frac{\partial^{2}}{\partial t^{2}} - \Delta_{x, g(t)} - \Delta_{y} \Rightarrow p^{(N)} : = \sigma(P^{(N)}) = \frac{2t}{N} \tau^{2} - \lvert \xi \rvert_{g(t)}^{2} - \lvert \eta \rvert^{2},
\end{equation*}
\noindent is of real type, properly supported, and is a strictly positive hyperbolic operator, due to the same argument in proof of Theorem 4.1. \\
\\
We check that if $ \tilde{u}^{(N)}(t, x, y) $ solves (\ref{twentysix}) , then so is $ u^{(N)}(t, x, Ay) $ for all $ A \in SO(N) $. We can see that in every local coordinate patch with some coordinates representation:
\begin{align*}
& \text{ $ \tilde{u}^{(N)}(t, x, y) $ solves (\ref{twentysix})} \Rightarrow \frac{2t}{N} \partial_{tt} \tilde{u}^{(N)}(t, x, y) - \Delta_{x, g(t)}\tilde{u}^{(N)}(t, x, y) - \Delta_{y}\tilde{u}^{(N)}(t, x, y) = 0 \\
\Rightarrow & \frac{2t}{N} \partial_{tt} \tilde{u}^{(N)}(t, x, Ay) - \Delta_{x, g(t)}\tilde{u}^{(N)}(t, x, Ay) - \Delta_{Ay}\tilde{u}^{(N)}(t, x, Ay) = 0, \text{$ y $ is a dummy variable} \\
\Rightarrow & \frac{2t}{N} \partial_{tt} \tilde{u}^{(N)}(t, x, Ay) - \Delta_{x, g(t)}\tilde{u}^{(N)}(t, x, Ay) - \Delta_{y}\tilde{u}^{(N)}(t, x, Ay) = 0, \text{by Proposition 4.1}.
\end{align*}
\noindent Hence we conclude that $ \tilde{u}^{(N)}(t, x, Ay) $ solves (\ref{twentysix}), for all $ A \in SO(N) $, by Proposition 4.1. We now define a rotationally invariant solution of (\ref{twentysix}) as in previous proof. We define locally within each patch that
\begin{equation}\label{twentyseven}
u^{(N)}(t, x, y) = \frac{1}{\text{Vol}(SO(N))}\int_{SO(N)} \tilde{u}^{(N)}(t, x, Ay) d\mu_{N}.
\end{equation}
\noindent Where $ d\mu_{N} $ is the volume form on $ SO(N) $. It works equally well globally on $ \tilde{M} \times [t_{0}, T] $. By the same reason as in proof of Theorem 4.1, we conclude that $ u^{(N)} $ solves (\ref{twentysix}) outside $ \bigcup WF(u^{(N)}) $. Of course, $ u^{(N)} $ is constant along the hypersurface $ t = \frac{\lvert y \rvert^{2}}{2N} $. \\
\\
We now analyze the wavefront set $ WF(\tilde{u}^{(N)}) $ as well as $ WF(u^{(N)}) $. We claim that
\begin{equation}\label{twentyeight}
WF(u^{(N)}) = WF(\tilde{u}^{(N)}) \subset \Sigma(P^{(N)}).
\end{equation}
\noindent Note that we can only do this analysis locally within each coordinate patch. Within each coordinate patch, the Riemannian metric is given of the form
\begin{equation*}
\tilde{g}^{(N)} = g_{ij}dx^{i}dx^{j} + \sum_{k} (dy^{k})^{2}, g_{ij} = g_{ij}(t, x).
\end{equation*}
\noindent Here we use Einstein summation convention. Hence the principal symbol of $ P^{(N)} $ has local expression
\begin{equation*}
p^{(N)} = \frac{2t}{N} \tau^{2} - g^{ij}\xi_{i}\xi_{j} - \lvert \eta \rvert^{2}.
\end{equation*}
\noindent Note that $ g^{ij} = g^{ij}(t, x) $. It follows that the Hamiltonian for $ p^{(N)} $ locally is of the form
\begin{equation}\label{twentynine}
H_{p^{(N)}} = \frac{4t \tau}{N} \frac{\partial}{\partial t} - 2g^{ij} \xi_{j} \frac{\partial}{\partial x_{i}} - \sum_{k} 2 \eta_{k} \frac{\partial}{\partial y_{k}} - \left(\frac{2\tau^{2}}{N} - \frac{\partial g^{ij}}{\partial t} \xi_{i} \xi_{j}\right) \frac{\partial}{\partial \tau} - \frac{\partial g^{ij}}{\partial x^{k}} \xi_{i} \xi_{j} \frac{\partial}{\partial \xi_{k}}.
\end{equation}
\noindent From (\ref{twentynine}), we can see that the null bicharacteristics with $ (t(s), x(s), y(s), \tau(s), \xi(s), \eta(s)) $ locally must satisfy the following first order ODE system
\begin{align}\label{thirty}
& \dot{t} = \frac{4t \tau}{N}, \dot{x}^{i} = - 2g^{ij} \xi_{j}, \dot{y}^{k} = -2 \eta_{k}, \dot{\eta}_{k} = 0, \nonumber \\
& \dot{\tau} = - \left(\frac{2\tau^{2}}{N} - \frac{\partial g^{ij}}{\partial t} \xi_{i} \xi_{j}\right), \dot{\xi}_{k} = - \frac{\partial g^{ij}}{\partial x^{k}} \xi_{i} \xi_{j}.
\end{align}
\noindent From this, an immediate consequence is that
\begin{equation*}
\dot{\xi}_{K} = -\frac{\partial g_{ij}}{\partial x^{k}} \dot{x}^{i} \dot{x}^{j}.
\end{equation*}
\noindent Within the characteristic set, locally we must have $ p^{(N)} = 0 $ which implies that $ \eta(0) \in WF(\tilde{u}^{(N)}) $ if and only if $ A \eta(0) \in  WF(\tilde{u}^{(N)}) $. Partially solving the integral curves above, we have
\begin{equation*}
\eta_{k} = \eta_{k}(0), y^{k} = y^{k}(0) - 2\eta^{k}(0) s.
\end{equation*}
\noindent This part is the same as in Theorem 4.1, and hence we conclude that (\ref{twentyeight}) holds. In addition, the propagation of singularities of $ u^{(N)} $ for each $ N $ are along integral curves satisfying (\ref{thirty}). \\
\\
Since $ u^{(N)} $ is radial in $ y $ and solves (\ref{twentysix}), we see that $ \Delta_{y} $ only corresponds to radial variation only. Along the hypersurface $ t = \frac{ \lvert y \rvert^{2}}{2N} $, we have $ u^{(N)}(t, x, y) = u^{(N)}(t, x, \lvert y \rvert) = u^{(N)}(t, x, \sqrt{2Nt}) $, and hence the projection of solution $ u^{(N)} $ onto $ [t_{0}, T] \times M_{x} $ in each coordinate patch must satisfy
\begin{align*}
0 & = \left(\frac{2t}{N} \cdot \frac{\partial^{2}}{\partial t^{2}} - \Delta_{x, g(t)} \right) u^{(N)}(t, x, \lvert y \rvert) - \sum_{j=1}^{N} \frac{\partial^{2}u^{(N)}(t, x, \lvert y \rvert)}{\partial y_{j}^{2}} \\
& = \left(\frac{\sum_{i=1}^{N} y_{i}^{2}}{N^{2}} \cdot \frac{\partial^{2}}{\partial t^{2}} - \Delta_{x, g(t)} ) u^{(N)}(t, x, \sqrt{2Nt}\right) - \sum_{j=1}^{N} \left(\frac{1}{N} \frac{\partial }{\partial t} + \frac{y_{j}^{2}}{N^{2}} \frac{\partial^{2} }{\partial t^{2}}\right)u^{(N)}(t, x, \sqrt{2Nt}) \\
& = - \Delta_{x, g(t)} u^{(N)}(t, x, \sqrt{2Nt}) - \frac{\partial u^{(N)}(t, x, \sqrt{2Nt})}{\partial t}.
\end{align*}
\noindent It follows that the projection of solution $ u^{(N)} $ of (\ref{twentysix}), or equivalently, (\ref{one}), at the hypersurface $ t = \frac{\lvert y \rvert^{2}}{2N} $ onto $ [t_{0}, T] \times M_{x} $ solves (\ref{heat}), while $ (t, x) $ is outside the projection of $ WF(u^{(N)}) $ onto $ [t_{0}, T] \times M_{x} $. In addition, $ u(t, x) : = \lim_{N \rightarrow \infty} u^{(N)}(t, x, y) $ solves (\ref{heat}) provide that $ (t, x) $ is outside the projection of $ WF_{\infty} $ onto $ [t_{0}, T] \times M_{x} $ in every coordinate patch. \\
\\
Lastly, it is trivial to see that the initial condition is satisfied, due to the argument above and uniqueness of solution of (\ref{heat}).
\end{proof}

\noindent Analogous to the Euclidean case, we can extend the time interval $ [t_{0}, T] $ to $ (0, T] $ for arbitrarily large $ T $. We can also consider the forward heat equation and its corresponding wave approximation on Riemannian manifold.
\begin{theorem}
Let $ Y : = (T, T_{0}) \times \tilde{M}_{x, g(t)} \times B_{y}^{N} $ be any open, precompact submanifold of $ X : = [t_{0}, T] \times M_{x, g(t)} \times \mathbb{R}_{y}^{N} $ and the boundary of the closure of $ Y $ is $ \mathcal{C}^{1} $. Assume that Ricci flow exists at some time $ T > 0 $ and $ \text{supp}(R(t, x)) \in Y $ where $ R $ is the scalar curvature. \\
Given the Cauchy problem of variable coefficient wave equation
\begin{align}
& \left(-\frac{2t}{N} \cdot \frac{\partial^{2}}{\partial t^{2}} + \frac{rR(t, x)}{N} \frac{\partial}{\partial t}- \tilde{\Delta}_{\tilde{g}^{(N)}} \right) u = 0, (t, x, y) \in [T, t_{0}] \times M_{x, g(t)} \times (\mathbb{R}_{y}^{N} \backslash \lbrace 0 \rbrace) \\
& u(T, x, y) = h(x), -\infty < T < t_{0} < 0. \nonumber
\end{align}
\noindent provided that $ h(x) \in \mathcal{C}_{0}^{\infty}(M_{x}) $. \\
(i) For each $ N $, there exists a radial solution $ u = u^{(N)}(t, x, y) $ of this Cauchy problem with respect to $ y $ - coordinates, i.e. $ u^{(N)}(t, x, y) = u^{(N)}(t, x, Ay), \forall A \in SO(N) $, such that the limit $ u(t, x) : = \lim_{N \rightarrow \infty} u^{(N)}(t, x, y) |_{ \lbrace t = -\lvert y \rvert^{2} \slash 2N \rbrace} $ exists outside points $ \mathcal{P} $ in the projection of wavefront set $ WF_{\infty} $ onto $ [T, T_{0}] \times \tilde{M}_{x, g(t)} $. \\
(ii) Outsider points in $ \mathcal{P} $, the limit $ u(t, x) \in [T, t_{0}] \times \tilde{M}_{x, g(t)} $ solves the forward heat equation
\begin{equation}
\partial_{t} u - \Delta_{x, g(t)} u = 0, u(T, x) = h(x), (t, x) \in [T, t_{0}] \times M_{x, g(t)}.
\end{equation}
\end{theorem}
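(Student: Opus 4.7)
The plan is to follow the template of the proof of Theorem 3.1 essentially verbatim, making two systematic adjustments: (i) replace the change of variable $t = |y|^2/2N$ with its negative-time analogue $t = -|y|^2/2N$, as was already done in passing from Theorem 4.1 to Theorem 4.4 in the Euclidean case; and (ii) keep track of the sign flips that appear in both the principal symbol and the resulting projected heat equation. First I would verify that the operator
\begin{equation*}
P^{(N)} = -\frac{2t}{N}\frac{\partial^{2}}{\partial t^{2}} + \frac{rR(t,x)}{N}\frac{\partial}{\partial t} - \tilde{\Delta}_{\tilde{g}^{(N)}}
\end{equation*}
is strictly hyperbolic on $Y$ in the sense of Definition 5.1 with respect to $\phi(t,x,y) = t - T$. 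Since $t \in [T, t_{0}] \subset (-\infty, 0)$, the coefficient $-2t/N$ is strictly positive, so after dividing through by it the leading term becomes $D_{t}^{2}$ with principal symbol
\begin{equation*}
\tilde{p}^{(N)} = \tau^{2} + \frac{N}{2t}\bigl(|\xi|_{g(t)}^{2} + |\eta|^{2}\bigr),
\end{equation*}
which, because $t<0$, again has two distinct real roots in $\tau$; checking $\tilde{p}^{(N)}(t,x,y,\phi'(x)) = 1 \neq 0$ is immediate, so Theorem 5.1 yields a semi-global weak solution $\tilde{u}^{(N)}$ on $Y$, and Theorem 4.3 promotes this to a smooth solution outside $WF(\tilde{u}^{(N)})$.

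Next I would construct the radial solution $u^{(N)}$ by the same averaging trick as in Theorem 3.1,
\begin{equation*}
u^{(N)}(t,x,y) = \frac{1}{\operatorname{Vol}(SO(N))} \int_{SO(N)} \tilde{u}^{(N)}(t, x, Ay)\, d\mu_{N},
\end{equation*}
using Proposition 4.1 and the rotation-invariance of $\Delta_{y}$ to see that each $\tilde{u}^{(N)}(t,x,Ay)$ still solves the equation, and using compactness of $SO(N)$ together with dominated convergence to pass derivatives under the integral. Along the hypersurface $t = -|y|^{2}/2N$, this $u^{(N)}$ is constant on $\mathbb{S}^{N-1}$-fibers and becomes $u^{(N)}(t, x, \sqrt{-2Nt})$, hence descends to a function on $[T, t_{0}] \times \tilde{M}_{x, g(t)}$.

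Then I would analyze the wavefront set. Computing the Hamiltonian vector field of the local principal symbol
\begin{equation*}
p^{(N)} = -\frac{2t}{N}\tau^{2} - g^{ij}\xi_{i}\xi_{j} - |\eta|^{2}
\end{equation*}
(ignoring the lower-order $rR/N$ term, which does not enter $\Sigma(P^{(N)})$) and reading off the ODE system for null bicharacteristics exactly as in (\ref{thirty}), the $(y, \eta)$ subsystem still decouples trivially — $\dot{\eta}_{k} = 0$ and $\dot{y}^{k} = -2\eta_{k}$ — so that rotations in $y$ preserve the null characteristic variety and $WF(u^{(N)}) = WF(\tilde{u}^{(N)}) \subset \Sigma(P^{(N)})$. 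The only sign change is in $\dot{t}$, which becomes $-\dot{t}$ of the previous case and therefore drives $t$ in the appropriate direction across the negative-time interval $[T, t_{0}]$.

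Finally I would pass to the limit. On $\{t = -|y|^{2}/2N\}$ I apply the chain rule identity analogous to (\ref{fifteen}), which after the sign change reads
\begin{equation*}
\Delta_{y} = -\frac{\partial}{\partial t} - \frac{2t}{N}\frac{\partial^{2}}{\partial t^{2}},
\end{equation*}
and substituting this into (\ref{twentysix})-type equation collapses the equation to $-\Delta_{x,g(t)}u + \partial_{t}u = 0$, i.e.\ the forward heat equation. The limit $u(t,x) = \lim_{N}u^{(N)}(t,x,\sqrt{-2Nt})$ is defined pointwise outside the projection of $WF_{\infty}$ (using Definition 4.1 verbatim, with the same direct-limit embedding $\mathbb{R}_{y}^{N}\hookrightarrow\mathbb{R}_{y}^{\infty}$), and the initial condition at $t = T$ is inherited from $u^{(N)}(T,x,y) = h(x)$ by uniqueness of the forward heat equation. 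The main obstacle I anticipate is the bookkeeping around the sign of $t$ in verifying strict hyperbolicity and in the asymptotic analysis of the bicharacteristics: one must check that $-2t/N > 0$ throughout the domain (which it is, since $t<0$) and that the integral curves of $H_{p^{(N)}}$ still escape to $|x|, |y| \to \infty$ as $N \to \infty$ so that $WF_{\infty}$ continues to be characterized, as in the Euclidean forward case of Theorem 4.4, only by the initial hypersurface and the point at spatial infinity.
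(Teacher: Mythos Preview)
Your proposal is correct and takes essentially the same approach as the paper: the paper's own proof is a single sentence stating that the argument of Theorem 3.1 goes through verbatim by time-reversibility, with the defining function replaced by $\phi(t,x,y) = t_{0} - t$. You have simply spelled out the details the paper leaves implicit, and your alternative choice $\phi(t,x,y) = t - T$ works equally well as a level-surface function on $[T,t_{0}]$.
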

\begin{proof} The proof is essentially the same as the proof of Theorem 3.1, since the wave equation is time reversible and hence we just set $ \phi(t, x, y) = t_{0} - t $ and everything else is essentially the same.
\end{proof}

\section{Further Discussion}
As we can see, the equation, e.g., (\ref{one}), has singularity when $ t = 0 $, we may fix this problem by replacing $ \frac{2t}{N} $ by $ \frac{2t}{N} + c $ where $ c $ is some positive number. Recall (\ref{ten}), (\ref{eleven}), we anticipate that with this setup, the coefficient ahead of $ \partial_{tt} $ is of the form $ \frac{2t}{N} + R(t) $. If $ R(0) > 0 $, then it follows that the operator
\begin{equation*}
\left(\frac{2t}{N} + R(t)\right)\partial_{tt} - \Delta_{x, g(t)} - \Delta_{y},
\end{equation*}
\noindent is the strictly hyperbolic operator starting at time $ t = 0 $ and hence we could discuss the solution of wave equation on $ \mathbb{R}_{\geqslant 0} \times M_{x, g(t)} \times \mathbb{R}_{y}^{N} $. We did a simplification of (\ref{ten}), saying that we consider the Riemannian metric on $ \mathbb{R}_{t} \times \tilde{M} $ as
\begin{equation}\label{thirtyone}
\tilde{g}^{(N)} = dy^{2} + c \cdot \frac{r^{2}}{N^{2}}dr^{2} + g(t), r^{2} = \sum_{i=1}^{N} y_{i}^{2}.
\end{equation}
\noindent Here $ c > 0 $ is some positive constant. From (\ref{thirtyone}) and the same argument as in Section 2, we conclude with the relation $ t = \frac{\lvert y \rvert^{2}}{2N} $ that
\begin{equation}\label{thirtytwo}
\tilde{g}^{(N)} = \left(\frac{2t}{N} + c\right) dt^{2} + td\omega_{1 \slash 2N}^{2} + g(t).
\end{equation}
\noindent We now compute the expression of $ \tilde{\Delta}_{\tilde{g}^{(N)}} $ with respect to the Riemannian metric (\ref{thirtyone}). \\
\\
I will compute $ \tilde{\Delta} $ and then we have the following theorem. \\
\\

We study the Cauchy problem of the following variable coefficient wave equation.
\begin{align}\label{thirtythree}
&\left(\left(\frac{2t}{N} + c\right) \cdot \frac{\partial^{2}}{\partial t^{2}} + \frac{t\cdot \text{tr}(\dot{g})}{2N} \frac{\partial}{\partial t} + \text{some lower order derivatives in $ y $}- \Delta_{\tilde{g}^{(N)}(t)} \right) u = 0, \\
& (t, x, y) \in [0, T] \times M_{x, g(t)} \times \mathbb{R}_{y}^{N}, u(T, x, y) = h(x). \nonumber
\end{align}
\noindent provided that $ h(x) \in \mathcal{C}_{0}^{\infty}(M_{x, g(t)}) $. Here $ M_{g(t)} $ is the noncompact Riemannian manifold $ (M, g(t)) $ without boundary satisfying Ricci flow $ \dot{g} = - 2Ric $; $ R(t, x) $ is the scalar curvature of $ (M, g(t)) $ at $ x \in M $; and the Riemannian metric $ \tilde{g}^{(N)}(t) $ on $  M_{x, g(t)} \times \mathbb{R}_{y}^{N} $ is given by (\ref{thirtyone}).
\begin{theorem}
Let $ 0 < t_{0} < T < \infty $. Let $ Y : = (t_{0}, T) \times \tilde{M}_{x, g(t)} \times B_{y}^{N} $ be any open, precompact submanifold of $ X : = [t_{0}, T] \times M_{x, g(t)} \times \mathbb{R}_{y}^{N} $ and the boundary of the closure of $ Y $ is $ \mathcal{C}^{1} $. Assume that Ricci flow exists at some time $ T > 0 $ and $ \text{supp}(R(t, x)) \in Y $ where $ R $ is the scalar curvature. \\
(i) for each $ N $, there exists a radial solution $ u = u^{(N)}(t, x, y) $ of (\ref{thirtythree}) with respect to $ y $ - coordinates, i.e. $ u^{(N)}(t, x, y) = u^{(N)}(t, x, Ay), \forall A \in SO(N) $, such that the limit $ u(t, x) : = \lim_{N \rightarrow \infty} u^{(N)}(t, x, y) |_{ \lbrace t = \lvert y \rvert^{2} \slash 2N \rbrace} $ on $ [t_{0}, T] \times \tilde{M}_{x, g(t)} $ exists outside points $ \mathcal{P} $ in the projection of wavefront set $ WF_{\infty} $ onto $ [0, T] \times \tilde{M}_{x, g(t)} $. 
\par
(ii) Outside points in $ \mathcal{P} $, the limit $ u(t, x) \in [0, T] \times \tilde{M}_{x, g(t)} $ solves the backward heat equation below
\begin{equation}\label{thirtyfour}
\partial_{t} u + \Delta_{x, g(t)} u = 0, u(T, x) = h(x), (t, x) \in [t_{0}, T] \times M_{x, g(t)}.
\end{equation}
\end{theorem}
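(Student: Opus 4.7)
The plan is to mirror the proof of Theorem 3.1 almost verbatim, with the one substantive change being that the modified metric (\ref{thirtyone}) produces the coefficient $(2t/N + c)$ in front of $\partial_{tt}$, and this is what now allows us to extend the hyperbolic analysis down to $t = 0$ rather than requiring $t \geqslant t_0 > 0$. First I would complete the calculation of $\tilde{\Delta}_{\tilde{g}^{(N)}}$ with respect to (\ref{thirtyone}) using the Dirichlet form argument carried out in Section 5, keeping track of the extra lower-order $y$-derivatives that arise because the metric component $c r^2/N^2 \, dr^2$ is radial in $y$ but not constant. This produces the explicit operator appearing in (\ref{thirtythree}); rewriting it in $(t,x)$-coordinates under $t = |y|^2 / 2N$ will recover the backward heat equation (\ref{thirtyfour}) up to terms that vanish in the projection by the same cancellation as in the proof of Theorem 3.1.

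Next I would verify strict hyperbolicity in the sense of Definition 5.1 for
\begin{equation*}
\tilde{P}^{(N)} = D_t^2 + \frac{1}{2t/N + c}\bigl(\Delta_{x,g(t)} + \Delta_y\bigr) + \text{l.o.t.},
\end{equation*}
with defining function $\phi(t,x,y) = t$. The principal symbol is
\begin{equation*}
\tilde{p}^{(N)}(t,x,y,\tau,\xi,\eta) = \tau^2 - \frac{1}{2t/N + c}\bigl(|\xi|_{g(t)}^2 + |\eta|^2\bigr),
\end{equation*}
and because $2t/N + c \geqslant c > 0$ uniformly on $[0,T]$, the roots in $\tau$ are simple real whenever $(\xi,\eta) \neq 0$ and $\tilde{p}^{(N)}(\phi') = 1 \neq 0$. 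Theorem 5.1 then delivers a semi-global weak solution on $Y$, and Theorem 4.3 upgrades it to a smooth solution outside $WF(\tilde{u}^{(N)})$.

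I would then construct the rotationally invariant solution
\begin{equation*}
u^{(N)}(t,x,y) = \frac{1}{\mathrm{Vol}(SO(N))}\int_{SO(N)} \tilde{u}^{(N)}(t,x,Ay)\, d\mu_N,
\end{equation*}
exactly as in (\ref{twentyseven}), using Proposition 4.1 and the fact that the $y$-dependence in $\tilde{g}^{(N)}$ is $SO(N)$-invariant (only $|y|$ appears) to confirm that $SO(N)$-averages of solutions remain solutions. The wavefront analysis then parallels (\ref{twentyeight})–(\ref{thirty}): compute the Hamilton vector field of $p^{(N)} = (2t/N + c)\tau^2 - g^{ij}\xi_i\xi_j - |\eta|^2$, write down the null bicharacteristic ODE, note that $\dot{\eta}_k = 0$ and $\dot{y}^k = -2\eta_k$ are unchanged, and conclude $SO(N)$-symmetry of the wavefront set. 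Restricting $u^{(N)}$ to the hypersurface $\{t = |y|^2/2N\}$ and substituting the identity $\Delta_y = \partial_t + (2t/N)\partial_{tt}$ from (\ref{fifteen}) then collapses (\ref{thirtythree}) to $\partial_t u + \Delta_{x,g(t)} u = 0$ on the projection, once the lower-order $y$-terms are checked to match.

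The main obstacle I expect is in that last collapse: with the extra factor $c$ in the metric, the relation between $\Delta_y$ and $\partial_t, \partial_{tt}$ on the hypersurface $\{t = |y|^2/2N\}$ picks up corrections from the $cr^2/N^2\,dr^2$ term, and the placeholder "lower order derivatives in $y$" in (\ref{thirtythree}) must be precisely the terms needed to cancel these corrections and recover (\ref{thirtyfour}). Verifying that the coefficients balance requires a careful re-derivation of $\tilde{\Delta}_{\tilde{g}^{(N)}}$ for metric (\ref{thirtyone}) in both radial and $x$-components, together with the observation that on a radial solution these extra $y$-derivatives reduce to $r$-derivatives and hence to $t$-derivatives under the change of variable. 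Once this bookkeeping is carried out, the pointwise existence of $u(t,x) = \lim_N u^{(N)}$ outside the projection of $WF_\infty$ and its solving (\ref{thirtyfour}) with $u(T,x) = h(x)$ follows by the same limiting argument used at the end of the proof of Theorem 3.1, now extended to include $t = 0$ because hyperbolicity no longer degenerates there.
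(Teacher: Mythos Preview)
The paper does not actually supply a proof of this theorem: Section 6 only writes ``I will compute $\tilde{\Delta}$ and then we have the following theorem,'' states Theorem 6.1, and follows it with a one-line sanity check in the Euclidean reduction. Your proposal to mirror the proof of Theorem 3.1 --- verify strict hyperbolicity via Definition 5.1 using the new coefficient $2t/N + c \geqslant c > 0$, invoke Theorem 5.1 for semi-global existence, average over $SO(N)$ as in (\ref{twentyseven}), analyze the wavefront set via the Hamiltonian of the modified principal symbol, and collapse on the hypersurface $\{t = |y|^2/2N\}$ --- is exactly the argument the paper gestures at, and in fact you have filled in more detail than the paper does.

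You are also right to flag the ``lower order derivatives in $y$'' term in (\ref{thirtythree}) as the genuine obstacle: the paper leaves that placeholder unspecified and never carries out the promised computation of $\tilde{\Delta}_{\tilde{g}^{(N)}}$ for the metric (\ref{thirtyone}), so the precise cancellation you describe --- that these lower-order terms must collapse, on a radial solution restricted to $\{t = |y|^2/2N\}$, to exactly what is needed to recover (\ref{thirtyfour}) --- is not verified in the paper either. Your plan is therefore at least as complete as the paper's own treatment; the bookkeeping you outline for the radial Laplacian with the extra $c\,r^2/N^2\,dr^2$ term is the missing ingredient in both.
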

\noindent To check the validity of this setup, we could reduce our manifold $ M_{x, g(t)} $ to be the Euclidean space $ \mathbb{R}_{x}^{n} $, in which case the scalar curvature is zero. In that case, the equation (\ref{thirtythree}) reduced to the homogeneous wave equation, and all lower terms are gone. \\
\\
Similarly as above, we can apply the same method to analyze the behavior of the solution of forward heat equation. \\
\\
Changing the metric is equivalent to changing the intrinsic geometric structure, we can also change the topological structure of the space we affiliate. For example, the Euclidean space wi the metric (\ref{thirtyone}) corresponds to some spherical structure. We may affiliate the upper half plane model with negative curvature when $ M $ has some negative curvature features. \\
\\
{\bf Acknowledgement} \\
The author would like to thank his advisor Prof. Steven Rosenberg for his support and mentorship.

\bibliographystyle{plain}
\bibliography{Wave}

\end{document}